\renewcommand\normalsize{%
    \@setfontsize\normalsize{11.7}{14pt plus .3pt minus .3pt}%
    \abovedisplayskip 10\p@ \@plus4\p@ \@minus4\p@
    \abovedisplayshortskip 6\p@ \@plus2\p@
    \belowdisplayshortskip 6\p@ \@plus2\p@
    \belowdisplayskip \abovedisplayskip}
\renewcommand\small{%
    \@setfontsize\small{9.5}{12\p@ plus .2\p@ minus .2\p@}%
    \abovedisplayskip 8.5\p@ \@plus4\p@ \@minus1\p@
    \belowdisplayskip \abovedisplayskip
    \abovedisplayshortskip \abovedisplayskip
    \belowdisplayshortskip \abovedisplayskip}
\renewcommand\footnotesize{%
    \@setfontsize\footnotesize{8.5}{9.25\p@ plus .1pt minus .1pt}
    \abovedisplayskip 6\p@ \@plus4\p@ \@minus1\p@
    \belowdisplayskip \abovedisplayskip
    \abovedisplayshortskip \abovedisplayskip
    \belowdisplayshortskip \abovedisplayskip}
\newcommand{\ZZ}{\mathbb{Z}}
\newcommand{\CC}{\mathbb{C}}
\newcommand{\QQ}{\mathbb{Q}}
\newcommand{\RR}{\mathbb{R}}
\newcommand{\PP}{\mathbf{P}}
\newcommand{\JJ}{\mathcal{J}}
\newcommand{\OO}{\mathcal{O}}
\newcommand{\iddb}{\sqrt{-1} \partial \overline{\partial}}
\newcommand{\db}{\overline{\partial}}
\newcommand{\al}{\alpha}
\newcommand{\ld}{\lambda}
\newcommand{\qa}{\quad}
\newcommand{\vp}{\varphi}
\newcommand{\yr}{ Y_{\reg}}
\newcommand{\Hom}{\mathcal Hom}
\newcommand{\noi}{\noindent}
\providecommand{\wt}[1]{\widetilde{#1}}
\providecommand{\abs}[1]{\left|#1\right|}
\providecommand{\norm}[1]{\lVert#1\rVert}
\theoremstyle{plain}
\newtheorem{theorem}{Theorem}[section]
\newtheorem{lemma}[theorem]{Lemma}
\newtheorem{proposition}[theorem]{Proposition}
\newtheorem{definition}[theorem]{Definition}
\newtheorem{question}[theorem]{Question}
 \newtheorem{example}[theorem]{\textnormal{\textbf{Example}}}
\theoremstyle{remark}
\newtheorem{remark1}[theorem]{Remark}
\DeclareMathOperator{\red}{red}
\DeclareMathOperator{\Supp}{Supp}
\DeclareMathOperator{\divisor}{div}
\DeclareMathOperator{\reg}{reg}
\DeclareMathOperator{\ord}{ord}
\DeclareMathOperator{\loc}{loc}
\DeclareMathOperator{\rank}{rank}
\begin{document}

\title[Canonical bundle formula, degenerating families of volume forms]{Canonical bundle formula and \\ degenerating families of volume forms}

\keywords{Canonical bundle formula, $L^2$ metrics, $L^2$ extension theorems, semipositivity theorems, fiberwise integration, plurisubharmonic functions}

\subjclass[2010]{32J25, 32U05, 14D06}

\author{Dano Kim}

\date{}

\maketitle

\begin{abstract}

\noi Canonical bundle formula due to Kawamata and others has played fundamental roles in algebraic geometry. We show that the  canonical bundle formula has analytic characterization in terms of fiberwise integration, which confirms a folklore conjecture. The proof uses $L^2$ metrics and the valuative equivalence of plurisubharmonic singularities. As an application, we identify the singularity of the Ohsawa measure in a general $L^2$ extension theorem of Demailly for log canonical pairs. As another consequence, we give a partial answer to a question of Berndtsson on semipositivity theorems.

\end{abstract}


\setcounter{tocdepth}{2}


\section{Introduction}

Canonical bundle formula~\cite{Ka98} (cf. \cite{FM00}, \cite{Am04}, \cite{Ko07}) has played fundamental roles in higher dimensional algebraic geometry. The prototype of such formula arises from the classical work of Kodaira on elliptic fibrations \cite[Thm. 12]{K64}. The setting of the canonical bundle formula and our main results is as follows.

  Let $f: X \to Y$ be a  surjective projective morphism with connected fibers between complex manifolds. Simple normal crossing divisors $R$ on $X$ and $B$ on $Y$ satisfy Kawamata's condition in Definition~\ref{snc} so that  $f$ is smooth over $Y \setminus B$. 
    Kawamata's canonical bundle formula is the following equality 
  
  \begin{equation*}
  K_X + R \sim_{\QQ} f^* ( K_Y + B_R +  J(X \slash Y, R)). 
\end{equation*} 
  where $B_R$ is the discriminant divisor associated to $R$ and $J(X \slash Y, R)$ is the moduli part line bundle.

The discriminant divisor $B_R$ is a particular linear combination  of the components of $B$, which reflects the singularities of both $R$ and the singular fibers of $f$, generalizing the case of elliptic fibrations.  The definition of $B_R$ is motivated within algebraic geometry :  according to \cite{Ka19},  the coefficients of the discriminant divisor  were defined so that they behave well under semi-stable reduction.

  In this paper, we show that $B_R$ has analytic characterization in terms of fiberwise integration along $f$ in Theorem~\ref{fi}. This is obtained via related results on $L^2$ metrics, which are metrics determined by fiberwise integration, such as the following Theorem~\ref{main}.

 \subsection{Statements of the main results} \hfill \\

  In the above setting, let $L := \OO(R)$ be the $\QQ$-line bundle associated to $R$. Let $M$ be the $\QQ$-line bundle on $Y$ defined by the relation $K_X + L = f^* (K_Y + M)$. \footnote{See \textbf{conventions} in the beginning of Section 2 for our notation of hermitian line bundles.}    Our first main result is the following metric version of the canonical bundle formula~\cite{Ka98}. 
   

 \begin{theorem}\label{main}
	
	 Let $f: (X,R) \to (Y,B)$ be a  surjective projective morphism with connected fibers between complex manifolds satisfying Kawamata's condition, Definition~\ref{snc}. 
	 
	 \begin{itemize}
	 \item Assume that the horizontal divisor $R_h$ is effective, so that its coefficients are  in the interval $[0,1)$. 
	
	\item Let  $\ld$  be a singular hermitian metric of $L$ given by (a defining meromorphic section of) the divisor $R$.
	Let $\mu$ be the $L^2$ metric (induced from $\ld$) for the $\QQ$-line bundle $M$ in Definition~\ref{L2metricQ}. 
	\end{itemize}
	 
\noi	 Then $\mu$ is equal to the product of two singular hermitian metrics $(\OO(B_R), \eta)$ and $(J := M - \OO(B_R), \psi)$, i.e. 
	  
	 \begin{equation*}\label{asy}
	  \mu = \eta + \psi \; \; \; \; ( e^{-\mu} = e^{-\eta} e^{-\psi})
	  \end{equation*}
	   where
	   \begin{itemize}
	   \item
	    $\eta$ is a singular hermitian metric given by the discriminant divisor $B_R$ and
	    \item $\psi$ is a singular hermitian metric of $J$ with semipositive curvature current and with  zero Lelong number at every point of $Y$. 
 \end{itemize}
\end{theorem}

\noi In particular, if $Y$ is compact, then the moduli part line bundle $J$ is nef by Proposition~\ref{psa}. This recovers nefness of $J$ in \cite{Ka98} with a different method in this case.  Theorem~\ref{main} answers a question of \cite[p.742]{EFM} which asked if there is a metric approach to the canonical bundle formula~\cite{Ka98}. 

 Note that $R$ and $B_R$ are not necessarily effective, thus $\ld, \mu$ and $\eta$ are not necessarily with semipositive curvature currents. Also while $\mu$ is uniquely determined by $\ld$, the metrics $\psi$ and $\eta$ are not: they  can be added with constants $c$ and $-c$, for example.

\noi \emph{Remark.} In order to make the comparison with this paper explicitly clear,  we emphasize that  \cite{Ka98} did not use analytic notions such as singular hermitian metrics or plurisubharmonic functions at all. 
 \\

Our next main result shows that the discriminant divisor has analytic characterization in terms of fiberwise integration along $f$ of singular volume forms. This confirms a folklore conjecture, cf. \cite[p.81]{Ka96}, \cite{Ka98}, \cite[p.10]{Ka00A}, \cite{T07}.

\begin{theorem}\label{fi}
 Let $f: (X,R) \to (Y,B)$ be a  surjective projective morphism with connected fibers between complex manifolds satisfying Kawamata's condition, Definition~\ref{snc}.
  Let $B_R$ be the discriminant divisor of $R$. Let $m = \dim Y$.
 \begin{itemize}
 
\item
    Let $\alpha$ and $\beta$ be  singular volume forms on $X$ with divisorial poles along  $R = \sum a_i R_i$ and along $R + \lceil -R_h \rceil$ respectively,  \emph{(cf. \eqref{volform}, \S 2.3)}.
    
    \item   Let $f_* \alpha$ and $f_* \beta$ be the singular volume forms on $Y_0$ obtained as the fiberwise integration of $\alpha$ and $\beta$ respectively along $f: X_0 := f^{-1} (Y_0) \to Y_0$ where $Y_0 \subset Y$ is the subset of regular values of $f$.
  
  \end{itemize}
  
\noi  Then the following hold. 
    
    \begin{enumerate}
    
    \item
    
     The fiberwise integration $f_* \beta$   has equivalent singularities (on $Y_0$) to a singular volume form $\theta$ (defined on $Y$) with divisorial poles along  $B_R$ up to a  plurisubharmonic weight with vanishing Lelong numbers.  In other words, 
 $\theta$ is locally written as 
	  
	  \begin{equation*}\label{vvv}
	   \theta(w) = g(w) ( \prod^m_{i=1}  \abs{w_i}^{-2 a_i} )  e^{-\psi(w)} \abs{dw_1 \wedge \ldots \wedge dw_m}^2   
	  \end{equation*}
 where local analytic coordinates $w = (w_1, \ldots, w_m)$ are adapted to $B$ on an open relatively compact subset  $U \subset Y$,	$\psi$ is a plurisubharmonic function on $U$ with zero Lelong number at every point, $\sum a_i \divisor (w_i) = B_R $ on $U$ and $g: U \to \RR$ is a positive and bounded function. 
	  
	  \item
	  
	  The fiberwise integration $f_* \al$ is less singular (on $Y_0$) than $\theta$.
	  
	  \end{enumerate}
	  	
\end{theorem}

 The statement (1) includes the fiberwise integration version of Theorem~\ref{main} when $R_h \ge 0$ (hence $R+ \lceil -R_h \rceil = R$). The possible  difference of singularities in (2) between $f_* \al$ and $\theta$ is inevitable and results from components of the snc divisor $R_h$ that are not transversal with a special fiber of $f: X \to Y$ over a point of $B$, see Remark~\ref{mild}.  Also note that even if a component $(w_i = 0)$ of $B$ lies outside $Y_0$, the factor $\abs{w_i}^{-2 a_i}$ appears in \eqref{vvv}.

  \subsection{Outline of the proof of Theorems~\ref{main}, \ref{fi}}  \hfill \\

We use the $L^2$ metric defined in the setting of the theorems and its fiberwise integration property. 
 
 $\bullet$ \textbf{$L^2$ metrics.} In this paper, we use singular hermitian metrics only for $\QQ$-line bundles. 
 
 First, in the setting of Theorem~\ref{main}, we have the log Calabi-Yau condition $K_X + L = f^* (K_Y + M)$ from \eqref{Jpart}, \eqref{relation2} where $L = O(R)$. When $L$ and $M$ are $\ZZ$-line bundles, the direct image is  $M = f_* (K_{X/Y} + L)$ for which the $L^2$ metric is defined. The definition and the main properties of the $L^2$ metric are generalized when $L$ and $M$ are $\QQ$-line bundles in Section 3, cf. Proposition~\ref{well}. 
 
 $\bullet$ In the setting of Theorem~\ref{fi},   $R_h$ is not necessarily effective and we need to use the direct image $f_* (K_{X/Y} + L +A)$ where  $A = \OO(\lceil -R_h \rceil)$, in the case when $L$ and $M$ are $\ZZ$-line bundles. In the general $\QQ$-line bundle setting, the counterpart of the direct image  is the $\QQ$-line bundle $$M \otimes (f_* A)|_{Y_{1, \text{free}}}$$ for which our $L^2$ metric is defined.   Here the Zariski open subset $Y_{1, \text{free}} \subset Y$ is the locus  where $f_* A$ is locally free of rank $1$, see Theorem~\ref{pt}.

$\bullet$ \textbf{Valuative equivalence.} Once we have the $L^2$ metric, in Section 4, we proceed via identifying the singularities of the $L^2$ metric in terms of all divisorial valuations over $Y$. 
This is done in the key  result Theorem~\ref{valu} by carefully applying the defining condition of the discriminant divisor to show that, in fact, the $L^2$ metric is \emph{valuatively equivalent} to 
 the psh metric given by the discriminant divisor.

$\bullet$ \textbf{Siu decomposition.}  This is indeed a very effective viewpoint for our purpose: the psh metric $\psi$ with vanishing Lelong numbers in Theorem~\ref{main} is obtained  immediately from comparing the Siu decompositions of the curvature currents of the two metrics in comparison, cf. Lemma~\ref{decom}. Namely the divisor part of the Siu decomposition of the curvature current of the $L^2$ metric is precisely given by the discriminant divisor, hence the residual part $R_T$ itself gives rise to the psh metric $\psi$ for $J$ in the proof of Theorem~\ref{main} and Theorem~\ref{fi}. 

$\bullet$  We remark that in Theorem~\ref{fi}, it is $f_* \beta$ (instead of $f_* \al$) that has equivalent singularities with $\theta$. Note that the choice of adding $T:= \lceil -R_h \rceil$ to $R_h$ is uniquely determined (among $\QQ$-divisors supported on $R_h$) so that Theorem~\ref{valu} works: $T$ is the only $\ZZ$-divisor which makes $T+ R_h$ both effective and klt. The local integrability from the kltness is indeed used toward the end of the proof of Theorem~\ref{valu}.  
\\


\subsection{Applications to $L^2$ extension theorems}  \hfill \\

\noi 
 We now turn to the application of Theorem~\ref{fi} to $L^2$ extension theorems of Ohsawa-Takegoshi type in several complex variables.  
 
 Let $Y \subset X$ be a subvariety of a complex manifold. Let $L$ be a holomorphic line bundle on $X$ and $K_X$  the canonical line bundle of $X$.  An \emph{$L^2$ extension theorem} is a type of a statement that  (under suitable conditions on $X, Y, L, \ldots$)  if a certain $L^2$ norm $ \norm{s}_Y$   is finite for a holomorphic section $s$ on $Y$ of  $(K_X \otimes L)|_Y$, then there exists  $\tilde{s} \in H^0(X, K_X \otimes L)$   such that  $$\tilde{s}|_Y = s \; \text{ and } \; \norm{\tilde{s}}_X \le c \norm{s}_Y $$ for some constant $c>0$.
  Here the $L^2$ norm $ \norm{s}_Y$ plays the crucial role of determining whether $s$ can be extended or not : we will call it as the \textbf{input norm} of the particular statement of $L^2$ extension. It is  crucial to understand the criterion of extension given by the  finiteness of the norm  $\norm{s}_Y$. In general, such finiteness is nontrivial to achieve due to singularity of the measure against which the norm is taken.

 Since the seminal work of Ohsawa and Takegoshi~\cite{OT87}, there have been extensive developments on $L^2$ extension theorems   (cf. e.g.  \cite{OT87}, \cite{Ma93}, \cite{Si96}, \cite{D00},  \cite{O01},  \cite{K07} among many other works, cf. the introduction of \cite{K21} and references therein.)   Usually, $Y$ was taken to be the zero set of a section of a vecor bundle. This was extended to the case where $Y$ is (an irreducible component of) the subvariety of the multiplier ideal sheaf of a log canonical pair in \cite{K07}.

 In \cite[Theorem 2.8]{D15}, Demailly gave another $L^2$ extension theorem for a log canonical pair $(X, \Psi)$ where $\Psi$ is a quasi-psh function with analytic singularities (see Definition~\ref{Omeasure} for the detailed setting). Let $Y$ be an irreducible component of the non-klt locus of $(X, \Psi)$. The input norm  $\norm{s}_Y$ in \cite{D15}  is taken   with respect to the following {measure} defined in terms of a certain limit  taken over some neighborhoods shrinking to the pole set of $\Psi$.  
 
 \begin{definition}[=Definition~\ref{Omeasure}] \cite{O01},    \cite[Section 2]{D15} \label{Omeasure1}
Let $dV_X$ be a smooth volume form on $X$.   The  \textbf{ \emph{Ohsawa measure}} $dV[\Psi]_Y$ of $\Psi$ on $Y$ (with respect to $dV_X$) is 
a positive measure $d\mu$ on $\yr$ satisfying the following condition: 

 For every $g$, a real-valued compactly supported continuous function on $\yr$ and for every $\tilde{g}$, a compactly supported extension of $g$ to $X$, we have the relation 
 
 \begin{equation*}\label{dvp}
 \int_{\yr} g \; d\mu = \lim_{t \to -\infty} \int_{\{x \in X, \; t < \Psi(x) < t+1 \}} \tilde{g} e^{-\Psi} dV_{X}. 
\end{equation*}

\end{definition}

  The definition of $dV[\Psi]_Y$ is formulated precisely according to
 the well established (cf. \cite{OT87}, \cite{O01}, \cite{D15}) analytic methods of proof of the $L^2$ extension based on the $L^2$ estimates for $\db$ operator. Namely, once the input norm $\norm{s}_Y = \int_Y \abs{s} dV[\Psi]_Y$ is finite, then those analytic methods will apply to extend the given section $s$. However, this is at the price of leaving the singularity of $dV[\Psi]_Y$ yet to be understood in terms of geometry. Hence it is essential to consider the following natural question.

\begin{question}\label{whatis}

What is the singularity of the Ohsawa measure $dV[\Psi]_Y$? 

\end{question}

 As a first step, in \cite[Thm. 1.2]{K21},  some essential understanding of the Ohsawa measure was obtained: if $Y$ has non-unique log canonical places, then the Ohsawa measure is the `infinity measure' i.e. only the constant zero function will satisfy the input norm condition. This makes the $L^2$ extension result void on such $Y$, and leaves us the main case of a unique log canonical place.

 In this paper, we complete the geometric investigation of  the Ohsawa measure by applying Theorem~\ref{fi}  when $Y$ has a unique log canonical place. We show that the singularity (or its close upper bound) of the Ohsawa measure $dV[\Psi]_Y$ is  described   by the singularity appearing in Kawamata's subadjunction from algebraic geometry. 
 
\begin{theorem}[=see Theorem~\ref{dimage}] \label{dimage1}

  Let $(X, \Psi)$ and $Y$ be as above, with a unique log canonical place $E \to Y' \to Y$ as in \eqref{EYX}.   The Ohsawa measure $dV[\Psi]_Y$ is less singular than the direct image $\pi_* v$ under $\pi: Y' \to Y$ of a singular volume form $v$ on $Y'$ with divisorial poles along Kawamata's discriminant divisor $B_R$ on $Y'$ multiplied by a local psh weight with vanishing Lelong numbers.
  \end{theorem} 

 This answers Question~\ref{whatis} and confirms Demailly's expectation that ``the Ohsawa measure is an analytic concept but I believe it has potential meaning in algebraic geometry", \cite[46:40]{D16}. 
 
 Note that since $\pi$ is modification, the direct image $\pi_* v$ is merely identification with $v$ via the identity $\pi : Y'_0 \to Y_0$ for a Zariski open set $Y_0 \subset Y$. (The restrictions to $Y_0, Y'_0$ determine these measures, cf. the proof.)
 
Also Theorem~\ref{dimage1} says that $L^2$ extension theorems can indeed extend as many sections as would be expected from algebraic geometry in that, for example when $Y$ does not contain proper log canonical centers in it, $dV[\Psi]_Y$ presents no local obstruction for extension as $\pi_* v$ does so. 
\\

\noi \emph{Remark.}  One may ask whether the Ohsawa measure can be replaced with a less singular volume form in an (improved) $L^2$ extension theorem. We expect that Theorem~\ref{dimage1} can be used to give a negative answer in a similar manner as \cite[Thm. 1.7, 1.8]{KS23}.   
 
  On the other hand, a previous work of the author \cite{K07} first established an $L^2$ extension theorem for a log canonical pair, from a different approach not involving the Ohsawa measure. It used the measure $\pi_* v$ directly to define the input norm. 
  Theorem~\ref{dimage1} provides  a precise comparison of the input norms of two $L^2$ extension theorems : \cite[Thm. 4.2]{K07} and \cite[Thm. 2.8]{D15} both for a log canonical pair on a smooth projective variety, see Theorem~\ref{kawaresi}. This was our initial motivation for Theorem~\ref{fi}.

\subsection{Applications to semipositivity theorems} \hfill \\

\noi 
As another consequence of Theorem~\ref{main}, we give a partial answer to a question of Berndtsson (2005) who asked whether it would be possible to use the theory of positivity of direct images (along the line of \cite{B09}, \cite{BP08}, \cite{PT}, cf. Section 3) in the proof of the celebrated semipositivity theorems in  \cite{Ka81}, \cite{Ka00} and others, which used Hodge theory. 
By establishing Theorem~\ref{Kawa00}, we answer this question when the direct image of the relative canonical line bundle is locally free of rank $1$. 
\\

This paper is organized as follows. In Section 2, we have preliminaries on plurisubharmonic functions, singular hermitian metrics and singular volume forms. In Section 3, we define and show the existence of the $L^2$ metric in our setting. In Section 4, we prove Theorem~\ref{valu}, the key result of valuative equivalence between the $L^2$ metric and the discriminant divisor. In Section 5, we prove Theorem~\ref{main} and Theorem~\ref{fi} using Theorem~\ref{valu}. Sections 6 and 7 deal with the applications to $L^2$ extension theorems and to semipositivity theorems respectively. 
\\

\noi \emph{Remark.} Theorem~\ref{main} in particular recovers nefness of $J$ in \cite{Ka98} by Proposition~\ref{psa}. Such refinement of nefness was also given in a recent work \cite{T22} by a different method. \footnote{We mention that the current paper first appeared on arXiv and \cite{T22} was received by a journal, both in October 2019, whereas a manuscript of the current paper was brought to the attention of the author of \cite{T22} in March 2019. We note that the approach of the current paper is not `similar', as opposed to a mention in \cite{T22}. }

 \qa

\noi \textbf{Acknowledgment.} 
 The author would like to thank O. Fujino and J. Kollár for answering his questions and having helpful discussions. The author is also grateful to B. Berndtsson, J.-P. Demailly,   Y. Kawamata, C. Schnell,   V. Tosatti and  K.-I. Yoshikawa for helpful conversations and to  M. Jonsson,  Y. Gongyo and H.S. Kim for useful comments on earlier versions of this work.  The author is grateful to S. Boucksom who kindly brought a version of Lemma~\ref{boucksom} to his attention. This research was partially supported by SRC-GAIA through NRF Korea grant No.2011-0030795 and by Basic Science Research Program through NRF Korea funded by the Ministry of Education (2018R1D1A1B07049683).


\qa

\section{Singular hermitian metrics and plurisubharmonic functions}

  We refer to \cite{D11}, \cite{B20}, \cite[Appendix B]{BBJ21} for introduction to singular hermitian metrics of a line bundle and plurisubharmonic (i.e. psh) functions. 
We  have some \textbf{{conventions}} used in this paper.

\begin{itemize}

\item  We will often write holomorphic ($\QQ$-)line bundles additively as in $L_1 + L_2 := L_1 \otimes L_2$.  

\item

  We will often denote a singular hermitian metric $e^{-\vp}$ of a line bundle $L$  simply by $\vp$. For singular hermitian ($\QQ$-)line bundles $(L, \vp)$, we write additively  as in $(L_1 + L_2 , \vp_1 + \vp_2)$.

\end{itemize}

\subsection{Singular hermitian metrics on line bundles} \hfill \\

  A \emph{psh metric} of a holomorphic $\QQ$-line bundle $L$ on a complex manifold $X$  is a singular hermitian metric of $L$ with semipositive curvature current (so that its local weight functions can be taken as psh  functions), cf. \cite{D11}, \cite{HPS}. 
When a $\ZZ$-line bundle $L$ is given  transition functions $\{ g_{ij} \}$ on a locally trivializing open cover $\{ U_i \}_{i \in I}$,  a (smooth or singular) hermitian metric of $L$ can be identified with a collection of functions $e^{-\vp_i} = \abs{g_{ij}}^{-2} e^{-\vp_j}$. (A similar description holds when $L$ is a $\QQ$-line bundle by taking some multiple $mL$ ($m \ge 1$) that is a genuine holomorphic line bundle.)
 A holomorphic section $s \in H^0(X, L)$ defines a psh metric $\vp$ of $L$ by taking $\vp_i = \log \abs{s_i}^2$ (from $s_i = s_j g_{ij}$), which can be also denoted by $e^{-\vp} = \frac{1}{\abs{s}^2}$.  
 
 We recall the following fact due to \cite{D92} (cf. \cite[(3.5)]{FF17}). 
       
 \begin{proposition}\label{psa}
 
 Let $X$ be a compact complex manifold.  If $L$ admits a  psh metric with vanishing Lelong numbers, then it is nef.
 \end{proposition}   
\noi   Note that  the converse  does not hold, see  \cite[Example after (6.11)]{D11}.

\subsection{Valuative equivalence of plurisubharmonic singularities}  \hfill \\

 \noi A function on a complex manifold, with values in $\RR \cup \{ -\infty \}$ is \emph{quasi-psh} if it is locally the sum of a psh function and a smooth $\RR$-valued function. 
 If two quasi-psh functions (or two psh metrics) $\vp, \psi$ on a complex manifold  satisfy that $\vp - \psi$ is locally bounded above (near every point of the domain), i.e. $\vp \le \psi + O(1)$, we say $\psi$ is less singular than $\vp$, following \cite[(6.3)]{D11}.
If $\vp = \psi + O(1)$, 
  we say that $\vp$ and $\psi$ have \emph{equivalent singularities}.

As a flexible (and quite different) variant, we  will use the following notion (cf. \cite[(8.5)]{FJ05} when the dimension is $2$, cf. \cite{BFJ08}, \cite{KR22}).  
    
\begin{definition}\label{vequiv}
  We say that two psh functions $\vp$ and $\psi$ on a complex manifold $X$ are \textbf{\emph{valuatively equivalent}}  if the following two equivalent conditions (due to \cite{BFJ08} when combined with the strong openness  \cite{GZ15}) hold: 
 
 (1) For all real $m > 0$, the multiplier ideals are equal : $\JJ(m\vp) = \JJ(m\psi)$.

 (2) At every point of a proper modification $\pi: X' \to X$, the Lelong numbers of $\pi^* \vp$ and $\pi^* \psi$ coincide. Equivalently, for every divisorial valuation $v$ centered on $X$, we have $v(\vp) = v(\psi)$.

\end{definition}

 When two quasi-psh functions $\vp$ and $\psi$ on a complex manifold are valuatively equivalent, we note the following facts: 
 \begin{enumerate}

 \item
 
 If, moreover,  $\vp - \psi$ (or $\psi - \vp$) happens to be quasi-psh, it should have vanishing Lelong numbers. Of course, in general, none of $\vp-\psi$ and $\psi-\vp$ needs to be quasi-psh (even if one of $\vp, \psi$ has analytic singularities): see, for example, \cite[(2.3), (2.9)]{KS19}.

 \item
 If  $\psi$ has analytic singularities, then $\vp \le \psi + O(1)$, cf.  \cite[Thm. 4.3]{K15}.

 \end{enumerate}

See e.g. \cite[(2.8)]{KS19}, \cite[(4.7)]{KR22} for more examples and results on valuative equivalence of plurisubharmonic singularities.

   \subsection{Singular volume forms}  \hfill \\

 \noi  In this paper,   a \textit{singular volume form} $\alpha$ on a complex manifold $X$ is a continuous volume form that can be locally written as $\alpha(w) = g(w) \abs{dw_1 \wedge \ldots \wedge dw_n}^2$ in local analytic coordinates $w=(w_1, \ldots, w_n)$ where  $g \ge 0$ is a  continuous function with values in $\RR_{\ge 0} \cup \{ +\infty \}$. Here  $ \abs{dw_1 \wedge \ldots \wedge dw_n}^2$ denotes the  standard volume form $dx_1 \wedge dy_1 \wedge \ldots \wedge dx_n \wedge dy_n$ with $dw_j = dx_j + i dy_j$ (cf. \cite[(1.2)]{BJ17}). We also freely identify a singular volume form with the corresponding measure especially in the context of Ohsawa measures.

\begin{itemize}

\item
   We will say that a singular volume form $\alpha$ on $X$ has \textit{(divisorial) poles along an  snc $\QQ$-divisor} (not necessarily effective) $R  = \sum^m_{i=1} a_i R_i$ on $X$ if it can be written in local analytic coordinates on $V \subset X$ adapted to $R$ as 
 
\begin{equation}\label{volform}
 \alpha(w) = g(w) ( \prod^k_{i=1}  \abs{w_i}^{-2 a_i} )  \abs{dw_1 \wedge \ldots \wedge dw_n}^2  
\end{equation}
\noindent  where $R|_V =  \sum^k_{i=1} a_i R_i$,  $R_i = \divisor(w_i)$ and  $g > 0$ is a  locally bounded (above and below) real-valued continuous function (i.e. $g$ has neither zeros nor singularities). 

\item
Similarly, we will say that a real-valued function $t$ has \emph{(divisorial) poles along} such $R = \sum a_i R_i$ if it can be  written locally 
 
 \begin{equation}\label{singf}
 t(w) = g(w) ( \prod^k_{i=1}  \abs{w_i}^{-2 a_i} ). 
 \end{equation}

\end{itemize}

 For two continuous singular volumes $\al$ and $\beta$ on a complex manifold $X$, we will say that $\al$ is \emph{less singular than} $\beta$ if, whenever locally written in terms of local coordinates, $\alpha(w) = g(w) \abs{dw_1 \wedge \ldots \wedge dw_n}^2$, $\beta(w) = h(w) \abs{dw_1 \wedge \ldots \wedge dw_n}^2$ with density functions $g = e^{-u}, h= e^{-v}$, we have $u \ge v + O(1)$. (This terminology is compatible with the case when $u$ and $v$ are quasi-psh functions.) If $u = v + O(1)$, we will say that $\al$ and $\beta$ have equivalent singularities.

Now let $f: X_0 \to Y_0$ be a proper holomorphic submersion between complex manifolds with $\dim X = m+n$ and $\dim Y=m$. The operation of \emph{fiberwise integration} sends a singular volume form $\al$ on $X$ to a singular volume form $f_* \al$ on $Y$, whose definition and basic properties are reduced to the local product case by using a partition of unity (cf. \cite[Chapter 1, (2.15) Special case]{DX}, \cite{N07}).

\section{$L^2$ metrics}

 In \cite{PT} (cf. \cite{BP08}, \cite{HPS}), the authors studied semipositivity of direct images of the form $M := f_* (K_{X / Y} + L)$ for a surjective projective morphism $f: X \to Y$ in terms of certain naturally defined singular hermitian metrics on $M$. We will use the special case of \cite[(3.2.2)]{PT} when $M$ is torsion-free of rank $1$.  
 
  \subsection{Direct images of adjoint line bundles}  \hfill \\

\noi  In this paper, we  use the following special case of   \cite[Thm. 3.3.5]{PT} (cf. \cite[(21.2)]{HPS}).

\begin{theorem}[Positivity of direct images, a special case]\cite[Thm. 3.3.5]{PT}, cf. \cite[Set-up 3.2.1]{PT} \label{hps} 
 Let $f: X \to Y$ be a surjective projective morphism with connected fibers between two connected complex manifolds. Let $Y_0$ be the set of regular values of $f$.

 \begin{itemize}
 \item
 Let $L$ and $M$ be $\ZZ$-line bundles on $X$ and on $Y$ respectively, such that $K_X + L = f^* (K_Y + M)$ holds.
  
 \item Let $A$ be a $\ZZ$-line bundle on $X$ such that $f_* A$ is torsion-free of rank $1$. Let $Y_{1, \text{free}}$ be the Zariski open subset of $Y$ where $f_* A$ is locally free of rank $1$. 
 
 \item Let $(L\otimes A, \lambda)$ be a psh metric such that the inclusion 
 
 \begin{equation}\label{generic}
 f_* (K_{X/Y} \otimes L \otimes A \otimes \JJ(\ld) ) \to f_* (K_{X/Y} \otimes L  \otimes A)
 \end{equation}
 
\noi is generically an isomorphism. 

\end{itemize}
  Then the $L^2$ metric $\mu$ for the holomorphic line bundle given by the direct image $M \otimes f_* A = f_* (K_{X/Y} \otimes L \otimes A)$ restricted to $Y_0 \cap Y_{1, \text{free}}  $ is a psh metric.  The $L^2$ metric $\mu$ extends to a psh metric for the line bundle given by $M \otimes f_* A$ restricted to $Y_{1, \text{free}}$. 

\end{theorem}

We will recall the definition of the $L^2$ metric $\mu$ (cf. \cite[(3.2.2)]{PT}) in Definition~\ref{L2metric}. 
  We define the $L^2$ metric on $Y_0 \cap Y_{1, \text{free}}$.  For the simplicity of notation, we denote $Y := Y_0 \cap Y_{1, \text{free}}$ in the following paragraphs up to Definition~\ref{L2metric}.

 Let $u \in H^0 (Y, M \otimes f_* A)$ be a given section. Since $M \otimes f_* A = f_* (K_X + L  + A + f^*K_Y^{-1})$ (where the connected fibers assumption $f_* \OO_X = \OO_Y$ is used), we have $$H^0 (Y, M \otimes f_* A) = H^0 (Y, \Hom (K_Y, f_* (K_X + L+A))).$$  
 
 Hence viewing $u$ as a sheaf morphism $u : K_Y \to f_* (K_X + L+A)$, for a nowhere vanishing local section $\eta$ of  $K_Y$ on an open subset $V \subset Y$, we have $u(\eta) \in H^0 (f^{-1} (V), K_X + L+A)$. From \cite[(3.2.2)]{PT}, we have locally
 
 \begin{equation}\label{divide}
  u (\eta) = \sigma_{i} \wedge f^* \eta 
 \end{equation}
 
\noi for some  $\sigma_{i}$, an $(L+A)$-valued holomorphic $n$-form ($n = \dim X - \dim Y$) defined on $U_i$ from an open cover $\{ U_i \}_{i \in I}$ of $f^{-1} (V)$. The existence of such $\sigma_i$ follows from computation of elementary nature in terms of local coordinates (`admissible coordinates' \cite[\S2.2]{MT08}) which locally make $f$ a projection.  The restrictions $\sigma_{ i} |_{X_y}$ glue together to define $\sigma |_{X_y}$ in the integral below which defines the $L^2$ metric $\mu$. 
 
 \begin{definition}[$L^2$ metric on $Y_0  \cap Y_{1, \text{free}}$]\cite[(3.2.2)]{PT} \label{L2metric}
 Let $Y_0 (\subset Y)$ be the set of regular values of $f$. 
  Let $(L+A, \lambda)$ be the given hermitian line bundle in  Theorem~\ref{hps}. We define the $L^2$ metric  $\mu$ (induced from $\ld$) for the $\QQ$-line bundle $M \otimes f_* A$ on $Y := Y_0  \cap Y_{1, \text{free}}$  pointwise for $y \in Y$ by letting
\begin{equation}\label{induced}
  \left( \abs{u}^2 \cdot e^{-\mu} \right) (y) := \int_{X_y}  c_n \sigma |_{X_y} \wedge \overline{\sigma} |_{X_y} e^{-\ld}  =  \int_{X_y} \abs{\sigma_y}^2 e^{-\ld}
 \end{equation}

 \end{definition}
 
\noi where $u \in H^0 (Y, M \otimes f_* A)$ and $c_n = i^{n^2}$.  Here we write $\sigma_y :=  \sigma |_{X_y}$ noting that $\sigma$ itself may not be globally defined. Also note that the family $\sigma_y$ is determined by $u$ only.

 We recall the following fiberwise integration property of the $L^2$ metric from \cite{PT}, cf. \cite{HPS}.

\begin{proposition}\label{fiberwise}

Let $V \subset Y_0$ and $u(\eta) \in H^0 (f^{-1} (V), K_X + L+A)$ be as above. 
The volume form defined by  $\abs{u(\eta)}^2 e^{-\ld}$  on $f^{-1} (V)$ has its fiberwise integration along $f$ equal to $\abs{u \cdot \eta}^2 e^{-\mu}$ where $u \cdot \eta \in H^0 (V, K_Y + M )$.  

\end{proposition} 

\begin{proof} 
As in \eqref{divide}, we have $u(\eta) = \sigma \wedge f^* \eta$. Note that this last wedge product makes sense even though $\sigma$ alone may not be defined globally. 
For simplicity of notation, let us write $f: X \to Y$ in the place of $f: f^{-1} (V) \to V$.  Since $ \abs{ u(\eta) }^2$ is a $\abs{L+A}^2$-valued volume form on $X$ (where a section of $\abs{L+A}^2$ is viewed as a collection of local functions compatible with $g_{ij} \overline{g_{ij}}$ for transition functions $g_{ij}$ of $L+A$), its multiplication with a metric $e^{-\ld}$ for $L+A$ is defined and equal to

$$ \abs{ u(\eta) }^2 e^{-\ld} = \abs{\sigma \wedge f^* \eta}^2 e^{-\ld} = \abs{\sigma}^2 e^{-\ld} \cdot \abs{f^* \eta}^2   $$ which is a volume form, i.e. an $(m+n, m+n)$ form on $X$. By the projection formula of fiberwise integration \cite{DX} and using \eqref{induced}, the fiberwise integration of this along $f$ is equal to $  \abs{u}^2 e^{-\mu}. \abs{\eta}^2. $
\end{proof}

 The following example  illustrates Theorem~\ref{hps}. 

 \begin{example}\label{ruled}

(1)  Let  $f:  X \to Y$ is a ruled surface where $X = \PP (E)$ for a holomorphic vector bundle $E$ of rank $2$ on a smooth projective curve $Y$, cf. \cite[\S V.2]{H}.   When $L = O_{\PP (E)} (2)$ and $M = \det E$, we have $K_X + L = f^* (K_Y + M)$. When $L$ is ample and provided with a smooth psh metric, Theorem~\ref{hps} says that $M$ is semipositive, which is consistent with \cite[\S V, (2.20), (2.21)]{H}, characterization of ample line bundles on $X$. 
 
 (2)  Now let $X = \PP (E)$ be as in \cite[(1.7)]{DPS94} with $L = O_{\PP (E)} (2)$ : it is shown there that the only possible psh metrics $\ld$ for $L$ are ones whose curvature current is equal to $2[C]$ where $C \subset X$ is a section of $f$ (in particular, $L$ is not semipositive).  It is clear that this $\ld$ does not satisfy the condition \eqref{generic} of Theorem~\ref{hps}.

\end{example}

\subsection{Generalization to $\QQ$-line bundles}  \hfill\\

 In this subsection, we will generalize Theorem~\ref{hps} to Theorem~\ref{pt}.
 Let $f: X \to Y$ be as before (i.e. as in Theorem~\ref{pt}). 
 
  Let $L$ and $M$ be $\QQ$-line bundles satisfying the relation $K_X + L = f^* (K_Y + M)$. 
 Note that even when $K_X, L$ and $K_Y$ are $\ZZ$-line bundles, it is possible that $M$ is only a $\QQ$-line bundle (see e.g. \eqref{elliptic2}). 
 
 Recall that the only case of the direct images we use from \cite[Thm. 3.3.5]{PT} is of the form $M \otimes f_* A$  in Theorem~\ref{hps}, and we only use its restriction to $Y_{1, \text{free}}$ which is a line bundle. Recall that  $Y_{1, \text{free}}$ is the Zariski open subset of $Y$ where $f_* A$ is locally free of rank $1$.

  In the case of a $\QQ$-line bundle $M$,  we will use the notation $M \otimes f_* A$ only to denote the $\QQ$-line bundle $M \otimes (f_* A |_{Y_{1, \text{free}}})$ on $Y_{1, \text{free}}$. 
  We will also refer to it as the `$\QQ$-line bundle given by $M \otimes f_* A$ restricted to $Y_{1, \text{free}}$’. The $L^2$ metric will be defined as a singular hermitian metric for this $\QQ$-line bundle, as follows.

Let $N = (k-1)M$ where $k\ge1$ is the smallest integer such that $kM$ is a $\ZZ$-line bundle (i.e. a usual holomorphic line bundle). Then $K_X + L+ f^*N = f^* (K_Y + M + N)$ is a $\ZZ$-line bundle on $X$.

 Now let $(L+A, \ld)$ be a psh metric. 
 Let $(N, \chi)$ be a smooth hermitian metric (without requiring curvature condition), which plays an auxiliary role. 
  Equip $L + A + f^*N$ with the metric $\lambda + f^* \chi$. Applying Definition~\ref{L2metric} of the $L^2$ metric in the $\ZZ$-line bundle case, we get the $L^2$ metric $\mu_\chi$ for $(M+N) \otimes f_* A$, induced from $\ld + f^* \chi$. 
  
  \begin{definition}\label{L2metricQ}

   Multiplying $(-N, -\chi)$ to this, we get a metric $\mu_\chi - \chi$ for the $\QQ$-line bundle $M \otimes f_* A$ restricted to $Y_0  \cap Y_{1, \text{free}}$. 
   We define this $\mu_\chi - \chi$ to be the  \emph{\textbf{$L^2$ metric}} induced from the given singular hermitian metric $\ld$. 
  \end{definition}

This definition is justified by the following 
\begin{proposition}\label{well}

The $L^2$ metric $\mu := \mu_\chi - \chi$ for the $\QQ$-line bundle $M \otimes f_* A$ restricted to $Y_0  \cap Y_{1, \text{free}}$, is well-defined, independent of the choice of $\chi$. 

\end{proposition}

\begin{proof} 
 We need to check that $\mu_\chi - \chi$ is independent of the choice of the smooth metric $\chi$. It suffices to check this at each point $y \in Y_0  \cap Y_{1, \text{free}}$, hence we may fix a local trivialization of the holomorphic line bundle $kM$ in a neighborhood $V$ of $y$.  Then $\mu_\chi$ and $\chi$ are functions on $V$.
 Now let $u$ be a holomorphic section of $M + N = kM$.
From \eqref{induced}, we have

\begin{equation}\label{psipsi}
 \abs{u}^2 e^{-{\mu}_\chi } (y) = \int_{X_y} \abs{\sigma_y}^2 e^{-\lambda} e^{-f^* \chi} = e^{-\chi(y)}    \int_{X_y} \abs{\sigma_y}^2 e^{-\lambda} 
 \end{equation}
where $\sigma_y$ is as in Definition~\ref{L2metric}.   We get the second equality since $f^*\chi$ is constant on $X_y$. 
\end{proof}

Also using this definition involving $M+ N = kM$ and $\chi$, we have the fiberwise integration property for the $L^2$ metric inherited from Proposition~\ref{fiberwise}. 
 
Now we have  the following version of Theorem~\ref{hps} when $L$ and $M$ are $\QQ$-line bundles.

\begin{theorem}\label{pt}

 Let $f: X \to Y$ be a surjective projective morphism with connected fibers between two connected complex manifolds.  Let $Y_0$ be the set of regular values of $f$. 
 
 \begin{enumerate}
 \item
 Let $L$ and $M$ be $\QQ$-line bundles on $X$ and on $Y$ respectively, such that $K_X + L = f^* (K_Y + M)$ holds as an equality of $\QQ$-line bundles.

 \item Let $A$ be a $\ZZ$-line bundle on $X$ such that $f_* A$ is torsion-free of rank $1$. Let $Y_{1, \text{free}}$ be the Zariski open subset of $Y$ where $f_* A$ is locally free of rank $1$.

  \item Let $(L+A, \lambda)$ be a psh metric such that $\JJ(\ld|_F) = \OO_F$  for a general fiber $F$ of $f$.
  \end{enumerate}
   Then the $L^2$ metric $\mu$ for the $\QQ$-line bundle   $M \otimes f_* A$ restricted to $Y_0  \cap Y_{1, \text{free}}$  is a psh metric and it extends to a psh metric for the $\QQ$-line bundle  $M \otimes f_* A$ restricted to $Y_{1, \text{free}}$.

\end{theorem}

The case when $A = \OO_X$ is used in the proof of Theorem~\ref{main} while the general case is for the proof of Theorem~\ref{fi}.

\begin{proof}

Both assertions (being a psh metric and extension) are local properties. 
As before, let $N = (k-1)M$ where $k\ge1$ is the smallest integer such that $kM$ is a $\ZZ$-line bundle. For $y \in Y_0  \cap Y_{1, \text{free}}$,  fix a local trivialization of $kM$ in a neighborhood $V$ of $y$. Take $\chi = 0$ and apply Theorem~\ref{hps} to $f^{-1} (V) \to V$ to obtain the psh property. Similarly, extension to $Y_{1, \text{free}}$ follows from applying Theorem~\ref{hps} to $f^{-1} (V) \to V$ where $V$ is a neighborhood of points $y \in Y_{1, \text{free}} \setminus Y_0$. 
 \end{proof} 

\quad
\\

\section{The setting and the valuative equivalence result} 

 In this section, we give the proof of the main theorems.

 \subsection{The setting of Kawamata's canonical bundle formula}  \hfill \\
 
\noi We first give the setting of the main theorems, which is the same as in \cite{Ka98} (cf. \cite{Ko07}) except that we are in the generality of complex manifolds (where the necessary Hironaka resolution theorems also hold, cf. \cite{H64}, \cite{AHV77}, \cite{W09}). 

  Let $(X,R)$ and $(Y,B)$ be two pairs of complex manifolds and  snc $\QQ$-divisors. 
Let $f: X \to Y$ be a surjective projective morphism with connected fibers. An irreducible component $R_i$ of $R$ is called \emph{horizontal} if $f(R_i) = Y$. Otherwise it is called \emph{vertical}. We write $R = R_h + R_v$ where $R_h$ is the horizontal part and $R_v$ is the vertical part. 
 For a divisor $ R = \sum a_i R_i$ ($a_i \neq 0$), define $\red(R) := \sum R_i $.

\begin{definition}\label{snc}\cite[Def. 8.3.6]{Ko07}, \cite[Thm. 2]{Ka98}
We will say that $f: (X,R) \to (Y,B)$ satisfies {\bf{Kawamata's condition}} if the following hold: 

\begin{itemize}

\item $B = \sum B_i$ is a reduced snc divisor on $Y$, i.e. $B = \red (B)$. 

\item $\red(R) + f^* B$ is an snc divisor on $X$.

\item $f$ is a submersion over $Y \setminus B$. 

\item $f(\Supp (R_v)) \subset B$. 

\item $R_h$ is relative snc over $Y \setminus B$. 

\item Coefficients of $R_h$ are in the interval $(-\infty,1)$.

\item $\rank f_* \OO_X(  \lceil -R_h \rceil ) =1$. 

\item $K_X + R$ is $\QQ$-linearly equivalent to the pullback of some $\QQ$-Cartier divisor on $Y$. 
\end{itemize}

\end{definition} 
 
\noi  Let $B_R$ be the \textbf{discriminant divisor} induced by $R$ defined as  $B_R = \sum c_i B_i$ where we set
 
\begin{equation*}\label{discriminant} 
 c_i := 1- \sup \{ c: (X , R + c f^* B_i) \text{\; is log canonical over the general point of } B_i  \} 
\end{equation*} 
  (cf. \cite[Def. 3.1]{Am99}, also cf. \cite{Ka98}). The condition in the sup is that, in other words,  the non-lc locus of the snc divisor $R + c f^* B_i$ is contained in the inverse image of the union of some Zariski closed proper subsets of $B_i$'s. Since $\red (R) + f^*B$ is snc, the lc (resp. klt) conditions are determined by coefficients of all the components being $\le 1$ (resp. $<1$), cf. \cite[Cor. 3.12]{Ko97}. 
  
  Now write $R_v = R_{v1} + R_{v2}$ where the sum $R_{v1}$ (resp. $R_{v2}$) consists of components whose images under $f$ are of codimension $1$  (resp. of codimension at least $2$) in $Y$.  Since the coefficients of $R_h$ are less than $1$, the condition in \eqref{discriminant} is then equivalent to $(X, R_{v1} + c f^* B_i) \text{\; being lc over the general point of } B_i $. Hence $B_R$ is also characterized as the unique smallest $\QQ$-divisor supported on $B$ such that 
  
  \begin{equation}\label{disc}
  R_{v1} + f^* (B - B_R) \le \red (f^* B)
  \end{equation}
cf. \cite[(8.3.7) (2)]{Ko07}. \footnote{We thank Hyunsuk Kim for pointing out to replace $R_v$ by $R_{v1}$ in \eqref{disc}.}
    (Also see \cite[p.895]{Ka98} for the equivalent original definition of $B_R$.)

\begin{remark1}\label{Rv}
(1)  It can be easily seen (from local expression of $f$ in terms of coordinates adapted to the snc divisors) that the components of $R_{v1}$ are contained in the components of $f^* B$. 

(2) Although the discriminant divisor $B_R$ is not depending on $R_h$, we do not have much choice to choose $R_h$ due to the log Calabi-Yau condition (Definition~\ref{snc}, (9)) which is indeed used in Lemma~\ref{BR}. 

\end{remark1}

 Later we use the following lemma. 
 
\begin{lemma}\label{add S}

 In the setting of (\ref{snc}), let $S$ be a $\QQ$-divisor supported on $B$. The discriminant divisor $B_{R + f^* S}$ associated to $R + f^* S$ is equal to $B_R + S$. 

\end{lemma}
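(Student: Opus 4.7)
The plan is to unwind the definition of the discriminant divisor and observe that adding the vertical divisor $f^{*}S$ to $R$ only shifts the relevant defining inequality by $S$, so that the minimality characterization immediately yields $B_{R+f^{*}S}=B_{R}+S$.

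First I would decompose $R+f^{*}S$ into its horizontal and vertical parts. Since $S$ is supported on $B$, the pullback $f^{*}S$ is supported on $f^{*}B$, which by Remark~\ref{Rv} consists only of vertical components; thus $(R+f^{*}S)_{h}=R_{h}$ and $(R+f^{*}S)_{v}=R_{v}+f^{*}S$. I would note that this is what makes the whole lemma sensible: the discriminant divisor only sees the vertical contribution, and $f^{*}S$ is purely vertical.

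Next I would spell out what $B_{R+f^{*}S}$ is by the characterization \eqref{837c}: it is the unique smallest $\QQ$-divisor $D$ supported on $B$ with
\begin{equation*}
R_{v}+f^{*}S+f^{*}(B-D)\le \red(f^{*}B).
\end{equation*}
Rewriting the left-hand side as $R_{v}+f^{*}(B-(D-S))$, I see that this inequality is exactly the defining inequality \eqref{837c} for $B_{R}$ applied to the divisor $D-S$ in place of $D$. The key step is the bijection $D\mapsto D-S$ on the set of $\QQ$-divisors supported on $B$ (well-defined precisely because $S$ itself is supported on $B$): this bijection is order-preserving and converts solutions of one inequality to solutions of the other, hence takes the unique smallest solution to the unique smallest solution.

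Therefore $D-S=B_{R}$, giving $B_{R+f^{*}S}=B_{R}+S$. I do not anticipate any real obstacle; the only thing to be slightly careful about is that $S$ need not be effective, but the argument only uses that $S$ is supported on $B$ so that the substitution $D\leftrightarrow D-S$ stays within the class of $\QQ$-divisors supported on $B$ over which the minimum in \eqref{837c} is taken.
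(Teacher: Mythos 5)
Your proof is correct and takes essentially the same route as the paper: the paper's one-line argument is precisely the observation that setting $R' = R + f^*S$ gives $R'_v = R_v + f^*S$, so the defining inequality \eqref{837c} for $B_{R'}$ is the one for $B_R$ with $D$ replaced by $D-S$, and minimality transports under the shift. You have simply written out the details that the paper leaves implicit.
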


\begin{proof}

It follows from the definition putting $R' = R + f^*S$ and $R'_v = R_v + f^*S$. 
\end{proof}

  In the setting of Definition~\ref{snc},  a $\QQ$-line bundle $ J(X \slash Y, R)$ (called the \emph{moduli part})  is defined on $Y$ by the following relation  (cf. \cite{Ka98})
  
\begin{equation}\label{Jpart}  
  K_X + R \sim_{\QQ} f^* ( K_Y + B_R +  J(X \slash Y, R)). 
\end{equation} 

\noi For the purpose of this paper involving metrics, we also  view \eqref{Jpart} as the corresponding equality of $\QQ$-line bundles
 
\begin{equation}\label{relation2}
 K_X +  L = f^* ( K_Y + M) = f^* (K_Y + J + H)
\end{equation} 
 where we define the corresponding $\QQ$ line bundles  $L := \OO(R), H := \OO(B_R), \; J := J(X/Y, R)$ and  $M := J+H$. 

Also we record the following obvious statement for later use. 
 
\begin{lemma}\label{Rh}

 Let $R_h = \sum a_i D_i$ be the snc divisor in Definition~\ref{snc} with the coefficients $a_i$ in the interval $(-\infty, 1)$. Then the divisor $R_h +  \lceil -R_h \rceil $ has all the coefficients in the interval $[0, 1)$. 

\end{lemma}

Let $A = \OO( \lceil -R_h \rceil)$ be the corresponding $\ZZ$-line bundle on $X$. Note that $A = \OO_X$ in the setting of Theorem~\ref{main}. 
 
 \subsection{Valuative equivalence of the $L^2$ metric}  \hfill\\

\noi In the setting of the previous subsection, the following is the key  result in the proof of the main results.

\begin{theorem}\label{valu}

 Let $f: X \to Y$,  $K_X + L = f^* (K_Y + M)$ and $A= \OO( \lceil -R_h \rceil)$ be as in the previous subsection.   Assume that the snc divisor  $R_{v1}$ (i.e. those vertical components whose images are of codimension $1$ in $Y$) is effective. (Hence the discriminant divisor $B_R$ is effective.)
 
 \begin{itemize}
 \item
 Let $\ld$ be a singular hermitian metric of $L+A$ given by the divisor $R +  \lceil -R_h \rceil$ (so that $\JJ(\ld|_F) = \OO_F$ holds  for a general fiber $F$ of $f$ due to Definition~\ref{snc} (6), (7)). 
 
 \item Let $\mu$ be the $L^2$ metric (induced from $\ld$) for $M \otimes f_* A$, defined in Definition~\ref{L2metricQ}. 
  \end{itemize}

\noi   Then the following hold. 

\begin{enumerate}
\item The  $L^2$ metric $\mu$ is a psh metric  for the $\QQ$-line bundle   $M \otimes f_* A$ restricted to the Zariski open subset  $Y_{1, \text{free}} \subset Y$.

\item
Let $\vp_{B_R}$ be a psh metric for  the $\QQ$-line bundle $\OO(B_R)$ given by the $\QQ$-divisor $B_R \ge 0$. 
The $L^2$ metric $\mu$ is valuatively equivalent to $\vp_{B_R}$ on $Y_{1, \text{free}}$. 
\end{enumerate} 

 \end{theorem}

 \begin{proof}

 (1) 
 Let $Y' := Y \setminus Z$ where $Z$ is the image of the components in $R_{v2}$ (hence $Z$ is of codimension $\ge 2$). 
  Apply Theorem~\ref{pt} to the restriction $f^{-1} (Y') \to Y'$ noting that the condition (2) of Theorem~\ref{pt}  is satisfied for $A= \OO( \lceil -R_h \rceil)$, cf. \cite{Ka98}, \cite{Am04}. 
  
  Note that $\ld$ given by (i.e. having divisorial poles along) the divisor  $R +  \lceil -R_h \rceil = R_h + \lceil -R_h \rceil + R_{v1} + R_{v2}$ is a psh metric on $f^{-1} (Y')$ since it is taking the complement of the possibly noneffective  divisor $R_{v2}$.  Hence the $L^2$ metric $\mu$ is a psh metric on $Y_{1, \text{free}} \setminus Z$ by Theorem~\ref{pt}. Since $Z$ is of codimension $\ge 2$, $\mu$ extends to $Y_{1, \text{free}}$ as a psh metric of the same $\QQ$-line bundle (as its psh local weight functions extend so). 
 \\
 
 (2) 
 Let $\tilde{R} := R +  \lceil -R_h \rceil$. Note that, from the definition, $B_{\tilde{R}} = B_R$. In the rest of this proof, for the convenience of notation, we will use $R$ to denote $\tilde{R}$. 
 
We need to show that $v(\mu) = v(B_R) (:= v(\vp_{B_R}))$ for every divisorial valuation $v = \ord_G$ where $G$ is a prime divisor over $Y$ with nonempty center in $Y$, cf. \cite[Section B.5]{BBJ21}. More precisely, there exists a proper modification  $\pi : Y' \to Y$ such that $G \subset Y'$ is a prime divisor and its center on $Y$ is $\pi(G) \neq \emptyset$. 

Now we consider the following diagram \eqref{XYs} where $\rho: X' \to X$ is bimeromorphic and $f'$ also satisfies Kawamata's condition (\ref{snc}). (The morphism $f': X' \to Y'$ in this diagram is obtained by first taking the fibration $X'' \to Y'$ induced by the bimeromorphic base change $Y' \to Y$ and then taking further blow-ups over $X''$ : cf. \cite[8.4.8]{Ko07}.)

   We take a Zariski open subset $Y_0 \subset Y$ and let $X_0 := f^{-1} (Y_0)$ so that $f: X_0 \to Y_0$ is submersion. Also determine $Y'_0$ and $X'_0 := f'^{-1} (Y'_0)$  in the following diagram so that they are isomorphic to $Y_0$ under $\pi$ and to $X_0$ under $\rho$ respectively.

\begin{equation}\label{XYs}
\begin{CD}
 X_0' \subset X' @>\rho>> X \supset X_0 \\
 @VVf'V  @VVfV \\
 Y_0' \subset Y' @>\pi>> Y \supset Y_0
\end{CD}
\end{equation}

  The restrictions $f: X_0 \to Y_0$ and $f' : X'_0 \to Y'_0$ can be identified with each other and so are the fiberwise integrations taken along them. This can be expressed as 
  
\begin{equation}\label{proj1}
  \pi^* (f_* u ) = f'_* (\rho^* u) 
\end{equation}  
  
\noi for a singular volume form $u$, where the domains are all restricted to $X_0, Y_0, X'_0, Y'_0$. Now let $R'$ be the divisor on $X'$ defined by $K_{X'}  + R' = \rho^* (K_X + R)$. Now we take $u$ to be a singular volume form with poles along $R$, so that the pullback $\rho^* u$ has poles along $R' = \rho^* R - K_{X' \slash X} $. 
  
  Consider the following projection formula of fiberwise integration combined with \eqref{proj1}: 
  
\begin{equation}\label{proj2}
 f'_* ( \rho^* u \wedge f'^* t) = f'_* (\rho^* u) \wedge t = \pi^* (f_* u ) \wedge t .
\end{equation}
  
  In order to show  $v(\mu) = v(B_R)$,  we will apply \eqref{proj2} for 
  the fiberwise integration taken along the restriction of $f'$ over a neighborhood $U \subset Y'$ of a general point of $G$ taking $t$ with poles $\delta B' - (v(\mu) - \al) G$ first and then with poles $\delta B' - (v({B_R})- \al)G$ secondly. Here $\al := \ord_G (K_{Y' / Y})$, i.e. the coefficient of the prime divisor $G$ in the relative canonical divisor $K_{Y' / Y}$.

 For arbitrary $\delta < 1$,  take a function $t$ with poles $\delta B' - (v(\mu) - \al) G$.  We claim that 
   
\begin{equation}\label{proj3}
    R'_{v1} + f'^* (\delta B' - (v(\mu) - \al)G) < \red (f'^* B'). 
 \end{equation} 

 To verify the claim \eqref{proj3}, first note that since $f_* u$ has the poles given by the psh weight $e^{-\mu}$, its pullback $ \pi^* (f_* u )$ on $Y'$ has poles given by the pullback of $e^{-\mu}$ divided by the contribution of the jacobian of the morphism $Y' \to Y$. When $g=0$ is a local equation of the divisor $G$, this pole is expressed as $e^{-\pi^* \mu} \abs{g}^{2 \al}$. 
 
 Since we are looking at a neighborhood $U$ of a general point of $G$ and considering the restriction of the divisor $\delta B' - (v(\mu) - \al) G$ to $U$, which is nothing but $(\delta - v(\mu) + \al)G$, the local expression of the pole  of $\pi^* (f_* u ) \wedge t$ in \eqref{proj2} is now given by 
 
 $$ e^{-\pi^* \mu} \abs{g}^{2 \al} \abs{g}^{-2 (\delta - v(\mu) + \al)} = e^{-\pi^* \mu} \abs{g}^{2 (v(\mu) - \delta)}. $$
 
 By Lemma~\ref{boucksom} applied for $W = Y', \vp = \pi^* \mu, \psi = 2(v(\mu)-\delta) \log \abs{g}$, this is locally integrable. Thus from the LHS of \eqref{proj2}, the singular volume form inside $f'_*$ is also locally integrable, which gives \eqref{proj3}. 
 
 Now from \eqref{proj3}, it follows that $R'_{v1} + f'^* (B' - (v(\mu) - \al)G) \le \red (f'^* B')$. From the definition of the discriminant divisor $B'_{R'}$ associated to $R'$ (see \eqref{disc}), we see that $v(B'_{R'})$ is the smallest possible coefficient for $G$ (in the place of ($v(\mu) - \al$)) to make this inequality hold. Hence we have

 \begin{equation}\label{proj4}
  v(\mu) - \al \ge v(B'_{R'}) = v(B_R) - \al 
  \end{equation}
 where the equality is from Lemma~\ref{BR}. Thus we have $v(\mu) \ge v(B_R)$.

  Now suppose that $v(\mu) - v(B_R) > 0$. This time, take a function $t$ to be with poles  $\delta B' - (v(B_R) - \al) G$ for $\delta < 1$. From \eqref{proj2}, we will have contradiction since the LHS is locally integrable while the RHS is not. Indeed, the RHS of \eqref{proj2} has poles along $$(v(\mu)- \al)G + \delta B' - (v(B_R) - \al) G  = (v(\mu) - v(B_R) )G + \delta B' $$ and it is not locally integrable when $v(\mu) - v(B_R) + \delta > 1$ (note that $G$ appears in $B'$ as we may assume so).

 On the other hand, the LHS of \eqref{proj2} is locally integrable since   $\rho^* u \wedge f'^* w$ is locally integrable before taking the fiberwise integration : it has poles along the snc divisor $T:= R' + f'^* ( \delta B' - (v(B'_{R'}) G)$ which is klt (since $\delta < 1$) for the following reason. Since $T$ is snc, we only need to check the coefficients of the components. For the vertical part of $T$ (with respect to $f'$), this follows   from the definition of the discriminant divisor $B'_{R'}$.
 
 Each horizontal component of $T$ is contained in $R'$ and is the strict transform under $\rho: X' \to X$ of a horizontal component for $f$ from the construction of $X'$. In view of $K_{X'} + R' - \rho^* R_v = \rho^* (K_X+ R_h)$, the coefficients of the horizontal part of $T$ is also less than $1$. 
  Note that our notation $R$  really means $\tilde{R} = R +  \lceil -R_h \rceil$ (for the given $R$ in the statement), where $\tilde{R}_h = R_h + \lceil -R_h \rceil$ is klt due to Lemma~\ref{Rh}.

  This is contradiction. Hence we have $v (\mu) = v( B_R)$, which completes the proof of Theorem~\ref{valu}. 
\end{proof} 

\begin{remark1}\label{ceiling}

 At the end of this argument, we see that adding precisely $S := \lceil -R_h \rceil$ to $R_h$ is essential for this result and its proof : $S$ must be an integral divisor (so that we can take the line bundle $A = \OO(S)$) and $S + R_h$ should be effective. 

\end{remark1}

 \begin{lemma}\label{BR}
 
  For $f$ and $f'$ in \eqref{XYs} satisfying Kawamata's condition, we have $v(B'_{R'}) = v(B_R) - \al$. 
 
  \end{lemma}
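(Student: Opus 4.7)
The plan is to derive the identity by applying the canonical bundle relation \eqref{Jpart} to both fibrations $f$ and $f'$, and then invoking the base change property of the moduli part under birational pullback of the base.

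First, from the log Calabi-Yau hypothesis (Definition~\ref{lcy1}) and the defining relation \eqref{Jpart} of the moduli part $J := J(X/Y,R)$, we have $K_X + R \sim_\QQ f^*(K_Y + B_R + J)$ on $X$. Since $R'$ is defined by $K_{X'} + R' = \rho^*(K_X + R)$ and $f \circ \rho = \pi \circ f'$ from diagram~\eqref{XYs}, pulling back via $\rho$ gives
\[ K_{X'} + R' \sim_\QQ f'^{*}\pi^*(K_Y + B_R + J). \]
By construction, $f' : (X', R') \to (Y', B')$ also satisfies the SNC condition, so applying \eqref{Jpart} to $f'$ defines $J' := J(X'/Y', R')$ and yields
\[ K_{X'} + R' \sim_\QQ f'^{*}(K_{Y'} + B'_{R'} + J'). \]
Substituting $K_{Y'} = \pi^* K_Y + K_{Y'/Y}$ and using that $f'^{*}$ is injective on $\QQ$-linear equivalence classes on $Y'$ (because $f'_*\OO_{X'} = \OO_{Y'}$), subtracting the two relations produces the key equivalence
\[ K_{Y'/Y} + B'_{R'} + J' \sim_\QQ \pi^*(B_R + J) \quad \text{on } Y'. \]

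Taking $v = \ord_G$ of both sides and using $v(K_{Y'/Y}) = \al$, the desired identity $v(B'_{R'}) = v(B_R) - \al$ reduces to the equality $v(J') = v(\pi^* J)$. This is precisely the base change property of the moduli part under birational pullbacks of the base: Kawamata's construction realizes $J$ as a well-defined $\QQ$-$b$-Cartier divisor on $Y$, so that any birational model $\pi : Y' \to Y$ carries a divisor representative of $J'$ that agrees with $\pi^* J$ at every divisorial valuation centered on $Y$. This is the single step where basic Hodge theory is invoked, as signaled in the remark preceding the lemma, and it is where Kawamata's use of the variation of Hodge structure along the smooth locus of $f$ enters: the moduli part is built from the Griffiths line bundle of the Deligne canonical extension of the Hodge filtration, which is functorial under birational base change.

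The main obstacle is precisely this Hodge-theoretic base change statement; the rest of the argument is a formal manipulation of $\QQ$-linear equivalences and the identity $K_{Y'} = \pi^* K_Y + K_{Y'/Y}$. As the paper remarks, a direct proof of the base change property of the moduli part avoiding Hodge-theoretic notions would be desirable, and would make the argument of Theorem~\ref{valu} (and hence of the main theorems) completely Hodge-free, but such an argument is not presently available.
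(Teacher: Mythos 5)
Your argument is essentially the same as the paper's, and it is correct in outline: both reduce the claim to the pull-back property of the moduli part, $J(X'/Y',R') = \pi^* J(X/Y,R)$, citing the Hodge-theoretic (canonical-extension) functoriality of the moduli $b$-divisor as in Kawamata, Ambro, and Koll\'ar. The paper simply states the resulting equality of divisors $K_{Y'} + B'_{R'} = \pi^*(K_Y + B_R)$ and deduces the valuation identity, whereas you spell out the formal manipulation that produces it from the two instances of \eqref{Jpart}.

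One small technical point worth fixing: you write the intermediate relation $K_{Y'/Y} + B'_{R'} + J' \sim_\QQ \pi^*(B_R + J)$ and then ``take $v = \ord_G$ of both sides,'' but $\QQ$-linear equivalence of divisors does not preserve multiplicities along $G$, so this step is not valid as stated. The resolution is precisely the point you make at the end, but it needs to come \emph{before} you take valuations: once $J$ and $J'$ are fixed as the specific $\QQ$-divisors given by the Hodge-theoretic (canonical-extension) construction, the moduli $b$-divisor is $\QQ$-$b$-Cartier with $J' = \pi^* J$ as \emph{divisors}, and likewise the relations defining $J$, $J'$ become genuine equalities of $\QQ$-divisors. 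With those choices the displayed relation is an equality, and applying $\ord_G$ is then legitimate and immediately gives $v(B'_{R'}) = v(B_R) - \al$. So the ingredients are all in your write-up; they just need to be invoked in the right order so that one never extracts a valuation from a mere $\QQ$-linear equivalence.
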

  
  \begin{proof}
  
  This follows from the equality of divisors $K_{Y'} + B'_{R'} = \pi^* (K_Y + B_R)$ which in turn follows immediately from the pull back property of the moduli part line bundles $J(X'/Y', R') = \pi^* J(X/Y, R)$ \cite[8.4.9 (3)]{Ko07}  (originally due to \cite{Ka98}). The pull back property results from the Hodge theoretic characterization of the moduli part line bundle and the fact that the canonical extension in Hodge theory commutes with pull backs. 
  \end{proof}

 Also the following lemma was used,  a weaker variant (using only one divisorial valuation) of valuative characterization of multiplier ideals (cf. \cite{BFJ08}, \cite{BBJ21}).

 \begin{lemma}\label{boucksom}
 
 Let $W$ be a complex manifold and $H$ a prime divisor on $W$. Let $\vp$ and $\psi$ be psh functions on $W$. Assume that $\psi$ has analytic singularities. If $\ord_H (\psi) > \ord_H (\vp) -1$, then $e^{\psi - \vp}$ is locally integrable at a general point of $H$. 
 \end{lemma}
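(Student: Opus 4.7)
The plan is to work near a general point $p\in H$ and use the analytic singularity structure of $\psi$ to reduce the claim to the local integrability of $\abs{z_1}^{2b}e^{-\vp}$, where $z_1$ is a local equation for $H$ and $b=\ord_H(\psi)$. Choose local holomorphic coordinates $(z_1,z')$ on a neighborhood $U$ of $p$ with $H\cap U=\{z_1=0\}$. Since $\psi$ has analytic singularities, one may write $\psi=c\log\sum_j\abs{g_j}^2+O(1)$ on $U$ with $c>0$ and $g_j$ holomorphic; setting $m:=\min_j\ord_H(g_j)$, we have $b=cm$. At a general $p\in H$ some $g_j$ of minimal order $m$ is nonvanishing at $p$, so after shrinking $U$ we obtain $\sum_j\abs{g_j}^2\asymp \abs{z_1}^{2m}$, hence $e^{\psi}\asymp \abs{z_1}^{2b}$ on $U$. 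Thus the claim reduces to showing $\abs{z_1}^{2b}e^{-\vp}\in L^1_{\mathrm{loc}}$ near $p$.

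Next, I would apply Siu's decomposition to the closed positive $(1,1)$-current $T:=\iddb\,\vp$ locally near $p$: because $p$ is a general point of $H$, after shrinking $U$ one can write $T=a[H]+S$ on $U$, where $a=\ord_H(\vp)$ and $S$ is closed positive with $\nu(S,p)=0$ (the locus in $H$ where the residual current has positive Lelong number is a proper analytic subset of $H$). Let $\vp_1$ be a local psh potential of $S$ on $U$; since $\vp-a\log\abs{z_1}^2-\vp_1$ is pluriharmonic on $U$ and hence locally bounded, one has $\vp=a\log\abs{z_1}^2+\vp_1+O(1)$ on $U$ with $\nu(\vp_1,p)=0$.

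Up to a bounded factor $\abs{z_1}^{2b}e^{-\vp}=\abs{z_1}^{2(b-a)}e^{-\vp_1}$ on $U$, and the hypothesis $\ord_H(\psi)>\ord_H(\vp)-1$ is exactly the condition $a-b<1$. If $b\geq a$, the factor $\abs{z_1}^{2(b-a)}$ is locally bounded, while Skoda's integrability theorem applied to $\vp_1$ (whose Lelong number is $0<1$ at $p$) yields $e^{-\vp_1}\in L^1_{\mathrm{loc}}$ near $p$. If $0<a-b<1$, pick any $s\in(1,\,1/(a-b))$ with H\"older conjugate $r:=s/(s-1)\in(1,\infty)$: then $\abs{z_1}^{2(b-a)}\in L^s_{\mathrm{loc}}$ near $p$ since $s(a-b)<1$, and $e^{-\vp_1}\in L^r_{\mathrm{loc}}$ near $p$ by Skoda applied to $r\vp_1$ (whose Lelong number at $p$ is still $0$). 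H\"older's inequality then delivers the required local integrability. The delicate step of the plan is the local Siu decomposition at the general $p\in H$ and the identification of the remainder $\vp-a\log\abs{z_1}^2$ with a genuinely psh function of vanishing Lelong number; once that is in hand, the rest is a clean Skoda--H\"older estimate, with the threshold $b>a-1$ appearing precisely as the condition permitting the H\"older exponent $s$ to be chosen strictly greater than $1$.
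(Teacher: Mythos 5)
Your proof is correct but takes a genuinely different route from the paper. The paper's argument proceeds in two steps: first, when $\vp$ has analytic singularities, it passes to a joint log-resolution $\mu$ of $\vp$ and $\psi$, restricts to a general $p\in H$ so as to avoid both the other divisors where $\vp$, $\psi$ have positive generic Lelong numbers and the images of exceptional divisors where $\mu^*\vp$, $\mu^*\psi$ do, and then reduces the integrability to an explicit monomial computation on $\mu^{-1}(U)$ in which the only divisorial contribution comes from (the strict transform of) $H$; second, for general psh $\vp$, it invokes Demailly approximation together with the arguments of Boucksom [B18, Theorem 10.11] and Demailly [D13, Lemma 2]. You bypass both the log-resolution and the approximation step: you peel off the $[H]$-contribution directly via Siu's decomposition at a general point of $H$, which leaves a residual psh potential $\vp_1$ with $\nu(\vp_1,p)=0$ there, and you close with Skoda's integrability theorem combined with a H\"older interpolation in the range $0<a-b<1$. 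Your route is shorter and more self-contained (no resolution, no approximation machinery), whereas the paper's route makes the monomial nature of the threshold more transparent and isolates the analytic-singularity case. One small imprecision worth fixing in your write-up: the locus $\{x\in H:\nu(S,x)>0\}$ is a \emph{countable union} of proper analytic subsets of $H$ (one for each level set $E_{1/n}(S)$ of the Siu semicontinuity theorem), not a single analytic subset; a general $p\in H$ still avoids it, so the argument is unaffected, but the parenthetical should say so.
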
 
 
Note the minor point that here  $e^{\psi - \vp}$ is locally $L^1$ instead of $L^2$ as in \cite[Thm. 10.11]{B20} since our convention from \S 2.1 is taking, for example, $\vp = \log \abs{f}^2$ instead of $\vp = \log \abs{f}$ throughout this paper.
 
\begin{proof}

(1) First assume that  $\vp$ also has analytic singularities. Let $\mu: W' \to W$ be a log resolution of both $\vp$ and $\psi$. We may choose a general point $p$ of $H$ and  shrink $W$ to some $U$, a neighborhood of $p$,  in order to avoid prime divisors on $W$ along which $\vp$ or $\psi$ has positive generic Lelong numbers and also to avoid the images of exceptional divisors of $\mu$ along which $\mu^* \vp$ or $\mu^* \psi$ has positive generic Lelong numbers. Then it is easy to see from $\mu: \mu^{-1} (U) \to U$ that the condition $v(\psi - \vp) > -1$ for $v = \ord_H$ alone determines local integrability of $e^{\psi - \vp}$.

(2) Now let $\vp$ be a general psh function. Let $(\vp_m)_{m \ge1}$ be a Demailly approximation sequence of $\vp$. (cf. \cite[Thm. 7]{D13} for the case of a bounded pseudoconvex domain in $\CC^n$. In general, one can use partitions of unity as in \cite{D11} to choose such a sequence.) Since $v(\psi) > v(\vp) -1$, we can fix $\ld > 1$ such that $v(\psi) > \ld v(\vp) -1$. Then for every $m \ge 1$, we have $v(\psi) > \ld v(\vp_m) -1$ since $\vp_m$ is less singular than $\vp$ in the sense that $\vp_m \ge \vp + O(1)$.  Applying (1) to $\vp_m$ (having analytic singularities), we then have $e^{\psi - \ld \vp_m} \in L^1_{\loc, x}$ at a general point $x$ of $H$, i.e. for $x \in H \setminus V_m$ where $V_m \subset H$ is a Zariski closed subset (which may depend on $m$). By Lemma~\ref{D13}, we have $e^{\psi - \vp}  \in L^1_{\loc, x}$ for the same $x$. 
\end{proof}

\begin{lemma}\label{D13}\cite{D13}
Let $\psi, \vp$ be psh functions on a complex manifold $X$. Let $(\vp_m)_{m \ge 1}$ be a Demailly approximation sequence of $\vp$. Let $\ld > 1$ be a real number. For every $x \in X$ and every integer $m \ge \lceil \frac{\ld }{2(\ld -1)} \rceil $, we have the implication 

$$   e^{\psi - \ld \vp_m} \in L^1_{\loc, x} \implies e^{\psi - \vp}  \in L^1_{\loc, x} .$$
\end{lemma} 
 
\begin{proof} 

 When $\psi = \log \abs{f}^2$ for a holomorphic function $f$, this is \cite[Cor. 4]{D13} which states the inclusion of multiplier ideals $\JJ(\ld \vp_m) \subset \JJ(\vp)$.  The same argument (which uses  \cite[Lem. 2]{D13}) works when we put  $e^{\psi}$ in the place of $\abs{f}^2$. 
\end{proof}

\section{Proofs of the main theorems}  

 Now using Theorem~\ref{valu}, we complete the proof of Theorem~\ref{main}.

\begin{proof}[{\textbf{Proof of Theorem~\ref{main}}}]

   First note that $R_v$ is not necessarily effective.   By Remark~\ref{Rv},  there exists an effective divisor $S$ supported on $B$ such that $R_{v1} + f^* S \ge 0$ and $B_R + S \ge 0$.  Let $N = \OO(S)$ be the associated $\QQ$-line bundle.  Consider the equality of $\QQ$-line bundles
 
 $$ K_X + L + f^* N = f^* (K_Y + J + H + N) .$$
 
 Equip $L + f^* N$ with a (not necessarily psh) singular hermitian metric $\ld$ given by  (i.e. having divisorial poles along) the divisor $R+ f^* S$. Then the induced  $L^2$ metric $\mu$ for $J + H + N$ is a psh metric by Theorem~\ref{valu} (1), which  we are applying to the snc divisor $R+ f^* S = R_h + R_{v1} + R_{v2} + f^*S$ in the place of `$R$ in the statement of Theorem~\ref{valu}' 
 in view of $R_{v1} + f^* S \ge 0$.  Here note that the condition $\JJ(\ld|_F) = \OO_F$ for a general fiber $F$  in Theorem~\ref{pt} is satisfied since the coefficients of the horizontal snc divisor $R_h$ are assumed to be less than $1$.  
 
 Now consider the Siu decomposition \cite{Si74}, cf. \cite[(2.18)]{D11}, \cite[2.2.1]{B04}  of the curvature current $\Theta_{\mu}$ of $\mu$: 
 
 $$ T := \Theta_{\mu} = \sum_W \nu(T, W) [W] + R_T $$
where $W$ runs through all codimension $1$ irreducible analytic subsets of $X$ and $\nu(T, W)$ is the generic Lelong number of $T$ along $W$. The closed semipositive current $R_T$ is the residual part of the decomposition whose Lelong superlevel sets $E_c (R_T)$ have codimension $\ge 2$ for every $c > 0$.

 By Theorem~\ref{valu} (2), the current   $\Theta_{\mu}$ (or its psh potential) is valuatively equivalent to the current  given by the effective discriminant divisor $B_{R+ f^* S} = B_R + S \ge 0$ (by Lemma~\ref{add S}). Thus the divisor part  $\sum \nu(T, Y_k) [Y_k]$ is a finite sum which is precisely given by the discriminant divisor $B_{R + f^*S}$ of the snc divisor $R+ f^*S$. 
 
 We apply Lemma~\ref{decom} to the curvature current $T$ of  $(J+H+N, \mu)$ and the curvature current $Q$ of $(H+N, \vp_{B_R + S})$ where $\vp_{B_R +S}$ is a psh metric given by the effective divisor $B_R + S$. Since the closed positive $(1,1)$ current $R_Q = 0$, the current $R_T$ has zero Lelong numbers at every point by Lemma~\ref{decom}.

 Since the closed positive $(1,1)$ current $R_T$ belongs to the first Chern class of the $\QQ$-line bundle $J + H + N - (H+N) = J$, there exists a singular hermitian metric $\psi$ of $J$ whose curvature current is equal to $R_T$ (as is well-known, see e.g. \cite[p.50]{B04}). This $\psi$ is the psh metric we wanted in the statement of (1) of the theorem : it has vanishing Lelong numbers.

 Choose a singular hermitian metric $\vp_S$ given by the divisor $S$ such that $\vp_{B_R +S} = \vp_{B_R} + \vp_S$. From $(J + H + N, \mu)$, we subtract $(J, \psi)$ and get a psh metric $(H+N, \mu - \psi)$ given by the effective divisor $B_R + S$. Now subtracting again $(N, \vp_S)$, we get $(H, \eta := \mu - \psi - \vp_S)$ which is a singular hermitian metric given by the original discriminant divisor $B_R$. Since $B_R$ may not be effective, $\eta$ may not be a psh metric. This $\eta$ is the one we were looking for, which completes the proof of Theorem~\ref{main}. 
 \end{proof}

\begin{lemma}\label{decom}

 Let $X$ be a complex manifold.  Let $T$ and $Q$ be closed semipositive $(1,1)$ currents on $X$.  Suppose that $T$ and $Q$ are valuatively equivalent, i.e. they have the same Lelong numbers at every point in $X$ and at every point in all proper modifications $\tilde{X} \to X$.  Then in the Siu decomposition of the closed positive $(1,1)$ currents $T$ and $Q$, 
 
\begin{align*}
  T = \sum_W \nu(T, W)[W] + R_T \; \; \; \text{and} \; \; \;
  Q = \sum_W \nu(Q, W)[W] + R_Q, 
\end{align*}

\noi the two residual parts, i.e. the closed semipositive $(1,1)$ currents $R_T$ and $R_Q$ are valuatively equivalent. 

\end{lemma}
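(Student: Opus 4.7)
The plan is to unpack the v-equivalence hypothesis at divisorial valuations, strip off the divisorial parts of the Siu decompositions using linearity of generic Lelong numbers under pullback, and read off the v-equivalence of the residual currents.

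First I would apply the hypothesis to the trivial modification: for every codimension one irreducible subvariety $Y_k\subset X$, the divisorial valuation $\mathrm{ord}_{Y_k}$ evaluated on $T$ and $Q$ coincides with the generic Lelong number along $Y_k$, so $\nu(T,Y_k)=\nu(Q,Y_k)$. Since the residual currents $R_T,R_Q$ have no divisorial Siu components by construction, this already shows that the two divisorial parts $\sum_k \nu(T,Y_k)[Y_k]$ and $\sum_k \nu(Q,Y_k)[Y_k]$ agree as (locally finite) currents on $X$.

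Next let $v=\mathrm{ord}_G$ be an arbitrary divisorial valuation on $X$, coming from a prime divisor $G$ on some proper modification $\pi:\tilde X\to X$. By definition $v(T)=\nu(\pi^*T,G)$, and pulling back the Siu decomposition term by term (which is legitimate because the divisorial sum is locally finite and $\pi$ is proper) together with linearity of generic Lelong numbers along $G$ gives
\[
 \nu(\pi^*T,G)=\sum_k \nu(T,Y_k)\cdot \mathrm{mult}_G(\pi^*[Y_k])+\nu(\pi^*R_T,G),
\]
and analogously for $Q$. Since the coefficients $\nu(T,Y_k)=\nu(Q,Y_k)$ by the previous step and the two left-hand sides are equal by hypothesis, the divisorial contributions cancel and we obtain $\nu(\pi^*R_T,G)=\nu(\pi^*R_Q,G)$, i.e. $v(R_T)=v(R_Q)$. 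As $v$ was arbitrary, $R_T$ and $R_Q$ have the same value at every divisorial valuation centered on $X$, so they are v-equivalent via condition (2) of Definition~\ref{vequiv}.

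The main technical point is to justify the pullback/linearity formula for generic Lelong numbers: one must check that the locally finite Siu sum commutes with the pullback $\pi^*$ and that $\nu(\pi^*[Y_k],G)$ is just the multiplicity of $G$ in the divisor $\pi^*[Y_k]$ (a finite nonnegative integer since $\pi$ is proper birational). These are standard properties of the Lelong number functional on closed positive $(1,1)$ currents, so once they are invoked the argument reduces to the simple subtraction carried out above; no approximation or analysis beyond linearity of $\nu(\,\cdot\,,G)$ on sums of closed positive currents is required.
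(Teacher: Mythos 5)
Your argument is correct and takes essentially the same route as the paper, which only asserts that the lemma is ``immediate from the construction of the Siu decomposition'' and remarks that $v(R_T)=v(R_Q)$ can be nonzero only at valuations whose center has codimension $\ge 2$. What you have written out — that the divisorial parts of the two Siu decompositions coincide because v-equivalence at codimension-one centers gives $\nu(T,Y_k)=\nu(Q,Y_k)$, and that additivity of the generic Lelong number $\nu(\cdot,G)$ on sums of closed positive currents then lets you cancel that common divisorial part at any divisorial valuation — is exactly the unwinding of the paper's assertion, so there is no gap.
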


\begin{proof} 

  This is immediate from the construction of $R_T$ (and $R_Q$) of the Siu decomposition (cf.  \cite[2.2.1]{B04}) as the limit $R_T = T - \sum_{k \ge 1} \nu(T, W_k) [W_k]$ where $(W_k)_{k \ge 1}$ is an at most countable family of irreducible analytic subsets of codimension $1$.  Note that we have $\nu(T, W) = \nu (Q, W)$ for every $W$.  Also note that  $v(R_T) = v(R_Q)$ is nonzero only possibly for $v = \ord_G$ where $G$ is a prime divisor  over $X$ (i.e. $G \subset X' \to X$ for a proper modification $\pi: X' \to X$) whose center on $X$, $\pi(G)$ is of codimension  $\ge 2$ in $X$.  
  \end{proof}

Now we turn to the proof of Theorem~\ref{fi}.

\begin{proof}[\textbf{{{Proof of Theorem~\ref{fi}}}}]

 We will use the full setting of Theorem~\ref{valu} to the given $f: (X, R) \to (Y, B)$,  taking $A = \OO (\lceil -R_h \rceil)$.  Let $Y_1 := Y_{1, \text{free}}$ be the Zariski open subset of $Y$ where $f_* A$ is locally free of rank $1$. The result will follow from a version of Theorem~\ref{main} for the morphism $f: f^{-1} (Y_1) \to Y_1$. 
 \\
 
  \emph{\textbf{Step 1}}. Since $\al$ is a continuous singular volume form, so is its fiberwise integration $f_* \al$ on $Y_0$. Hence we may check `less singular' on $Y_0 \cap Y_1$ once $\theta$ is constructed from the $L^2$ metric in this case, which will be a singular hermitian metric defined on $Y_1$. 
  
  Since being less singular is a local property on $Y_0 \cap Y_1$, we may restrict to an open neighborhood of a point $p \in Y_0 \cap Y_1$. (The $\theta$ we consider will extend from $Y_1$ to $Y$, then we  use a partition of unity to obtain $\theta$ on $Y$ as in the statement.) 
  \\

  \emph{\textbf{Step 2}}.
  Now we take the following variant of Theorem~\ref{main} for $f: f^{-1} (Y_1) \to Y_1$. 
  Similarly to Theorem~\ref{main}, we apply Theorem~\ref{valu} putting $R + f^*S$ in the place of \emph{`$R$ of Theorem~\ref{valu}'} where $S \ge 0$ is an snc divisor supported on $B$ such that $R_{v1} + f^*S$ and $B_R + S \ge 0$. Let $N = O(S)$ be the associated $\QQ$-line bundle.

  We take the $L^2$ metric $\mu$ for $M \otimes f_* A$ (on $Y_{1, \text{free}}$)  induced from a psh metric $\ld$ for the $\QQ$-line bundle $L+f^*N  +A$ given by the effective divisor $$R +f^*S + \lceil -R_h \rceil = R_h +\lceil -R_h \rceil  + R_{v1} + f^*S + R_{v2}.$$

 Now Theorem~\ref{valu} (2) says that $\mu$ and $\vp_{B_R + S}$ are valuatively equivalent where $\vp_{B_R +S}$ has divisorial poles along the snc divisor $B_R + S$. 
 Take the curvature currents: let $T$ be the curvature current of the psh metric (i.e. the singular hermitian line bundle) $(J+ H+ N, \mu)$ and $Q$ the curvature current of the psh metric $(H+N, \vp_{B_R + S})$. From the Siu decomposition of $T$, we have $R_T$ with vanishing Lelong numbers, belonging to the first Chern class of the $\QQ$-line bundle $J$. This gives rise to a psh metric $\psi$ with vanishing Lelong numbers for $J$. Subtracting $(J, \psi)$ from $(J+H+N, \mu)$, we obtain a psh metric $(H+N, \mu - \psi)$ which is one given by the effective divisor $B_R + S$.

 Choose a singular hermitian metric $\vp_S$ given by the divisor $S$. Subtracting again $(N, \vp_S)$, we get $(H, \vp_{B_R} = \mu - \psi - \vp_S)$ which is a singular hermitian metric given by the original discriminant divisor $B_R$. Here we can and do arrange so that $\vp_{B_R +S} = \vp_{B_R} + \vp_S$. 
 \\

  \emph{\textbf{Step 3}}.
  Finally we use the fiberwise integration property of the $L^2$ metric $\mu$ on $Y_1$ given in Proposition~\ref{fiberwise} which continues to hold in the $\QQ$-line bundle setting of Proposition~\ref{well}. 
  
 Recall that in the notation of Proposition~\ref{fiberwise}, $u$ and $\eta$ are local sections so that  $\abs{u(\eta)}^2$ and $\abs{u \cdot \eta}^2$ are (local) smooth volume forms.    The fiberwise integration of the singular volume form $\abs{u(\eta)}^2 e^{-\ld}$ is equal to $\abs{u \cdot \eta}^2 e^{-\mu}$.  Hence the  fiberwise integration of $\tilde{\beta} := \abs{u(\eta)}^2 e^{-(\ld-f^* \vp_S)}$ is equal to $\abs{u \cdot \eta}^2 e^{-(\mu-\vp_S)}$, which we take as $\theta$. 
 Note that $\tilde{\beta}$ has divisorial poles given by $e^{-(\ld-f^* \vp_S)}$ which is along the divisor $R +  \lceil -R_h \rceil$. It has equivalent singularities with the given $\beta$. 
 
 Since $\al$ has divisorial poles along the divisor $R$, we have $\al$ less singular than $\tilde{\beta}$ and thus $f_* \al$ less singular than $f_* \tilde{\beta} = \theta$. 
  \end{proof}

\begin{remark1} \label{mild}
In Theorem~\ref{fi}, we have `less singular' instead of `equivalent singularities' since a component of $R_h$ may be possibly not transversal with a special smooth fiber of $F$ lying on a point of $B$. (Note that $R_h$ is relative snc over $Y \setminus B$, cf. Definition~\ref{snc}.) Even though the coefficients of $T:= R_h + \lceil -R_h \rceil$ are less than $1$, $(R_h +\lceil -R_h \rceil) |_F$ may not have such coefficients (for example, in local coordinates $(x,y)$, when $F$ is $(x=0)$ and a component of $T$ is $c (x-y^2)$ with $\frac{1}{2} \le c < 1$)  and may possibly contribute to the difference of singularities between $f_* \al$ and $f_* \beta$. 

 We note that, however, such contribution (and the resulting difference away from `equivalent singularities') is only mild in that it is still a locally integrable singularity on $Y$  due to the fact that both $R_h$ and $R_h + \lceil -R_h \rceil$ are klt divisors on $X$. 
\end{remark1}

\begin{remark1}
 In the case  when $\dim Y = 1$, the volume asymptotics from Theorems~\ref{main}, \ref{fi} can be compared with  many previous results such as  \cite{BJ17},   \cite[(2.1)]{EFM} (see also related works by 
  \cite{Y10},  \cite[(2.1)]{GTZ16}, \cite{GTZ19}, \cite[(3.8)]{Be16} and others). 
   Such volume asymptotics for $\int_{X_t} \abs{\sigma_t}^2$  can be roughly  summarized as 
    $ \abs{t}^{-2\al} \abs{\log  \abs{t}^2}^{\beta}$
\noi for certain $\al \ge 0$ and $\beta$ for a local analytic coordinate $t$ on $Y$ with $\dim Y = 1$. We refer to the above papers for precise statements.
\end{remark1}

 \quad
 \\

\section{Applications to $L^2$ extension theorems}

 Let  $(X, \psi)$ be a pair where $X$ is a complex manifold and $\psi$ is a psh metric with neat analytic singularities for a $\QQ$-line bundle $L$ on $X$. 
  A \emph{log canonical center} (or an \emph{lc center}) of $(X,\psi)$ is an irreducible subvariety $Y \subset X$ that is the image of a prime divisor $E$ in a log resolution $X' \to X$ of the pair, with its discrepancy $a(E, X, \psi)$ equal to $-1$ (i.e. the lowest possible value for an lc pair), cf. \cite[Def. 3.3]{Ko97}, \cite[Def. 2.3]{K21}. We  call such $E$ a \emph{log canonical place} (or an \emph{lc place}) of $Y$.  When $Y$ is an lc center and there is no other lc center $Y_1$ such that $ Y_1 \supsetneq Y $, we call  $Y$ a \emph{maximal lc center}.  The maximal lc centers are precisely the irreducible components of the non-klt locus of $(X, \psi)$. (We refer to \cite[Section 2]{K21} for more details on this setting.)  
 
\subsection{Ohsawa measure}  \hfill\\

\noi  Now let $Y$ be a maximal lc center of an lc pair $(X, \psi)$ as above. Let $\Psi$ be a quasi-psh function on $X$ with analytic singularities determined by the relation $e^{-\Psi} h = e^{-\psi}$ where $h$ is a smooth hermitian metric of $L$. In this setting, the Ohsawa measure $dV[\Psi]_Y$ is defined following the original definition of \cite{O01}, cf. \cite{D15}. 
   
\begin{definition}   \cite[Section 2]{D15} cf. \cite[Def. 3.1]{K21} \label{Omeasure}
Let $dV_X$ be a smooth volume form on $X$.   The  \textbf{ \emph{Ohsawa measure}} $dV[\Psi]_Y$ of $\Psi$ on $Y$ (with respect to $dV_X$) is 
a positive measure $d\mu$ on $\yr$ satisfying the following condition:  for every $g$, a real-valued compactly supported continuous function on $\yr$ and for every $\tilde{g}$, a compactly supported extension of $g$ to $X$, we have the relation 
 
 \begin{equation}\label{dvp}
 \int_{\yr} g \; d\mu = \lim_{t \to -\infty} \int_{\{x \in X, \; t < \Psi(x) < t+1 \}} \tilde{g} e^{-\Psi} dV_{X}. 
\end{equation}

\end{definition}

    See also \cite[Prop. 3.2]{K21} for the existence of this measure. In the notation $dV[\Psi]_Y$, its dependence on the given $dV_X$ is suppressed.

       Now assume that $Y$ has a unique lc place $E \to Y$ where $E \subset X'$ is on a log resolution $\mu: X' \to X$ of $(X, \psi)$.  Then the morphism $E \to Y$ has connected fibers (cf. \cite[Cor. 2.12]{K21}). Writing the discrepancy relation as in \cite[(5) in Proof of Prop. 3.2]{K21}, we have 
    
     \begin{equation}\label{discrep2}
 K_{X'} + E + F \equiv f^* (K_X + [\Psi]). 
 \end{equation}
 
 By  \cite[Prop. 3.2]{K21}, the Ohsawa measure $dV[\Psi]_Y$ is equal to the direct image $(\mu|_E)_* d\nu$ where $d\nu$ is a measure corresponding to a singular volume form with poles along the snc divisor $R := F|_E$. Hence $dV[\Psi]_Y$ is also a continuous singular volume form on $\yr$. 
 
 Following \cite{Ka98}, we may assume (by choosing $\mu: X' \to X$) that 
   the restriction $\mu|_E: E \to Y$ factors through a proper modification $\pi : Y' \to Y$  and  $f: (E, R) \to (Y', B)$ satisfies Kawamata's condition of Definition~\ref{snc} for a suitable reduced snc divisor $B$ on $Y'$. We have the following diagram. 
   
 \begin{equation}\label{EYX}  
  \xymatrix{  E \; \ar[d]^{f} \ar@{^{(}->}[r] & X' \ar[dd]^{\mu}  \\
   Y'  \ar[d]^{\pi}   & \\
    Y \ar@{^{(}->}[r] & X }
    \end{equation}

 Now applying Theorem~\ref{fi} to $E \to Y'$, we have the following description of the Ohsawa measure in terms of the discriminant divisor $B_R$ supported on $B$ as a consequence of Theorem~\ref{fi}.

\begin{theorem}\label{dimage}
  Let $(X, \Psi)$ and $Y$ be as above, with a unique lc place as in \eqref{EYX}.   The Ohsawa measure $dV[\Psi]_Y$ is less singular (as continuous singular volume forms on $\yr$) than the direct image $\pi_* v$ under $\pi: Y' \to Y$ of a singular volume form $v$ on $Y'$ with poles along Kawamata's discriminant divisor $B_R$ on $Y'$ multiplied by a local psh weight with vanishing Lelong numbers.
  
\end{theorem}

   Equivalently, $v$ can be locally written as  (up to a bounded positive factor) 
	  
	  \begin{equation*}\label{asy1}
	   v(w) =  ( \prod^m_{i=1}  \abs{w_i}^{-2 a_i} )  e^{-\psi(w)} \abs{dw_1 \wedge \ldots \wedge dw_m}^2   
	  \end{equation*}
 in local analytic coordinates $w = (w_1, \ldots, w_m)$ adapted to $B$ on an open subset  $U \subset Y'$ (with $\dim Y' = \dim Y = m$)	  where  $\psi$ is a psh function on $U$ with vanishing Lelong numbers and $\sum a_i \divisor (w_i) = (B_R)|_U$.

 \begin{proof}
 
 As noted above,  by  \cite[Prop. 3.2]{K21}, the Ohsawa measure $dV[\Psi]_Y$ is equal to the direct image $(\mu|_E)_* d\nu$ where $d\nu$ is a measure corresponding to a singular volume form with poles along the snc divisor $R := F|_E$. 
 
 Both $d\nu$ and $dV[\Psi]_Y$ have the property of putting no mass on closed analytic subsets, \cite[Prop. 3.2, Prop. 3.5]{K21}. Hence these measures are determined by their restrictions to nonempty Zariski open subsets. Therefore it suffices to check the property of `less singular' on the restriction of $f: E \to Y'$ to $f_0 : f^{-1} (Y_0) \to Y_0$ where $Y_0 \subset Y'$ is the subset of regular values of $f$. 
 
 Now it is a matter of applying Theorem~\ref{fi} (for $f: X \to Y$ in the notation therein) to $f: E \to Y'$ at hand. The conclusion follows from $f_* d\nu$ (`the Ohsawa measure on $Y'$') being less singular than $v$, which results from  $d\nu$ (resp. $\beta$) having the divisorial pole along $R = F|_E$ (resp. along $R + \lceil -R_h \rceil$) while $v$ at hand corresponds to $\theta = f_* \beta$ of Theorem~\ref{fi}. 
 \end{proof} 
  
\begin{remark1}
Remark~\ref{mild} applies also here. Note that even though $Y$ has a unique lc place,  there may exist a component of $R_h$ which comes from a component of $F$ whose image under $\mu$ is equal to $Y$ but with discrepancy $> -1$. 

\end{remark1}

   \subsection{Comparison of two $L^2$ extension theorems} \hfill \\

 Using Theorem~\ref{dimage}, we establish a precise comparison between two  $L^2$ extension theorems of \cite[Thm. 4.2]{K07} and \cite[Thm. 2.8]{D15} for log canonical pairs (for a smooth complex projective variety $X$). 
For  this comparison, we first derive \cite[Thm. 3.9]{K21}, a version of $L^2$ extension for a maximal log canonical center, from \cite[Thm. 2.8]{D15}. For the convenience of readers, we recall the statement.

\begin{theorem}\label{extension}  \cite[Thm. 2.8 and Remark 2.9 (b)]{D15}, cf.  \cite[Thm. 3.9]{K21}. 

Let $(X, \omega)$ be a weakly pseudoconvex K\"ahler manifold. 	
	 Let an lc pair $(X, \psi)$, a $\QQ$-line bundle $L$, a psh metric $\psi$ for $L$ and a quasi-psh function $\Psi$ be  as in Definition~\ref{Omeasure}, so that $ e^{-\psi} = h e^{-\Psi}$. 
	Assume that, for some $\delta > 0$,

\begin{equation}\label{curvature}
   i \Theta(L, h) +  \alpha \iddb \Psi =   i \Theta (L,\psi) +  (\alpha -1) \iddb \Psi  \ge 0
\end{equation}

\noi for all $\alpha \in [1, 1+\delta]$. 
 Let $B$ be a $\QQ$-line bundle on $X$ such that $K_X + L + B$ is a $\ZZ$-line bundle. Let $b$ be a psh metric of $B$.

Let $Y$ be a maximal lc center of $(X, \psi)$ with a unique lc place. If
  a holomorphic section $s \in H^0 (Y, (K_X + L + B)|_Y)$ satisfies

\begin{equation}\label{input1}
 \int_{\yr} \abs{s}_{\omega, h, b}^2  dV[\Psi]_Y < \infty ,
\end{equation}

\noi  then there exists a holomorphic section $\wt{s} \in H^0 ( X, K_X + L + B) $ such that we have $\tilde{s}|_Y = s$ and moreover

\begin{equation}\label{output1}
 \int_X   \abs{\wt{s}}_{\omega, h, b}^2  \gamma (\delta \Psi) e^{-\Psi}  dV_{\omega}        \le   \frac{34}{\delta}  \int_{\yr} \abs{s}_{\omega, h, b}^2  dV[\Psi]_Y.  
 \end{equation}
 
\noindent  Here the Ohsawa measure $dV[\Psi]_Y$ is taken with respect to the smooth volume form $dV_\omega$ and $\gamma$ is as in \cite[(2.7)]{D15}.

\end{theorem}

Now we assume that $X$ is projective. 
 Let $\norm{s}_1 $ be the input norm of \cite[Thm. 4.2]{K07}   which was given as the  $L^2$ norm for an adjoint line bundle with respect to a Kawamata metric~\cite[Def. 3.1]{K07}.
  Let $\norm{s}_2$ be the input norm in \cite[Thm. 3.9]{K21} with respect to the Ohsawa measure $dV[\Psi]_Y$. From Theorem~\ref{dimage}, we have the following comparison of two $L^2$ extension theorems from \cite{K07} and \cite{D15}.

 \begin{theorem} \label{kawaresi}	  
	 Let $Y$ be a maximal lc center of $(X, \psi)$   with a unique lc place. 
 Let $B$ be a $\QQ$-line bundle on $X$ such that $K_X + L + B$ is a $\ZZ$-line bundle. Let $b$ be a psh metric of $B$.    Let $s$ be a holomorphic section of $K_Y + M + B|_Y = (K_X + L + B)|_Y$ on $Y$ (where $K_Y + M$ is defined as in \cite[Def. 3.1]{K07}).

  Let $p \in Y$ be a point. 
 If the  $L^2$  norm of $ s$  with respect to a Kawamata metric $e^{-\kappa}$ is locally finite at $p$, i.e.    there exists a neighborhood  $V \subset Y$ of $p$ such that   
 
 $$ \norm{s}_1 :=  \int_V \abs{s}^2 \cdot e^{-\kappa} \cdot b|_Y < \infty ,$$

\noi then the $L^2$ norm of $s$  with respect to the Ohsawa measure is locally finite at $p$, i.e.  there exists a neighborhood  $U \subset Y$ of $p$ such that  
 $$ \norm{s}_2 :=  \int_U  \abs{s}_{\omega, h, b}^2  dV[\Psi]_Y < \infty. $$ 

\end{theorem}

\begin{proof}

We will use the notation as in the setting of  \eqref{EYX}.  In particular, we have the relation $K_{Y'} +  M' = \pi^* (K_Y + M)$ where   $\pi : Y' \to Y$ is the modification as before. 
 
 From the definition of the Kawamata metric $e^{-\kappa}$ \cite[Def. 3.1]{K07}, locally we can write $e^{-\kappa} = e^{-\vp} e^{-\eta}$ as a metric for $M'$ on $Y'$ where $e^{-\vp}$ is a singular hermitian metric given by the discriminant divisor and $e^{-\eta}$ is a choice of a smooth metric. On the other hand, by Theorem~\ref{dimage}, the Ohsawa measure  has density  locally  equal to the product $e^{-\vp} e^{-\chi}$ where $\chi$ is a local psh weight with vanishing Lelong numbers. Let $\beta$ be a psh function such that $b = e^{-\beta}$ near $p$. 
 Then by Lemma~\ref{monat} (2), we have the equivalence:
 
  $$\int_{\pi^{-1}(V)} \abs{\pi^* s}^2 e^{-\vp} e^{-\eta} e^{-\beta}  < \infty \text{\;\; if and only if \;\;} \int_{\pi^{-1}(U)} \abs{\pi^* s}^2 e^{-\vp} e^{-\chi}  e^{-\beta} < \infty $$ where $U$ and $V$ are as in the statement. This completes the proof.  Note that $\vp$ is locally the difference of two psh functions with analytic singularities, which is why we need (2) of Lemma~\ref{monat}. 
 \end{proof}

\begin{lemma}  \label{monat}
 Let $u$ and $v$ be two psh functions on a complex manifold $X$. Suppose that $v$ has zero Lelong numbers at every point of $X$. Then the following hold:  
 
 (1) For the multiplier ideals, we have $\JJ(u+v) = \JJ(u)$. 
 
 (2) More generally, let $\psi$ be a psh function with analytic singularities. We have $e^{\psi  - u} \in L^2_{\loc} $ if and only if $e^{\psi - u - v} \in L^2_{\loc}$. 

\end{lemma}

\begin{proof} 

It is clear that (2) implies (1).   (1) is from \cite[Prop. 2.3]{K15} and (2) from \cite[Cor. 10.15]{B20}. Both of them use the strong openness theorem of \cite{GZ15}. 
\end{proof}

\begin{remark1}

Theorem~\ref{kawaresi} means that the  geometric understanding of the Ohsawa measure obtained in Theorem~\ref{dimage1} enables  Demailly's $L^2$ extension theorem (\cite[Thm. 2.8]{D15}, \cite[Thm. 3.9]{K21}) to serve as generalization of the $L^2$ extension in  \cite[Thm. 4.2]{K07}   in that
  a strictly positive curvature condition (``small ample'' $A$) in \cite[Thm. 4.2]{K07} is replaced with a semipositive one in \cite{D15} (see \eqref{curvature} in Theorem~\ref{extension}).    
\end{remark1}

\quad\\

\section{Applications to semipositivity theorems} 

In this section, we will give an  application of the main results to semipositivity theorems,  Theorem~\ref{Kawa00}. Before that, we first revisit the elliptic fibrations.

\subsection{Elliptic fibrations} \label{elliptic0}  \hfill \\

\noi  The classical elliptic fibrations studied by Kodaira~\cite{K63}, \cite{K64} provide the important initial case of the canonical bundle formula. Even in this case, our main result seems new, cf. Theorem~\ref{elliptic}.

Let $f: X \to Y$ be a relatively minimal elliptic fibration with $\dim Y = 1$ (see e.g. \cite[Thm. V.12.1]{BPV}). When $f$ has multiple fibers $m_1 F_1, \ldots, m_k F_k$, we have the canonical bundle formula 

\begin{equation}\label{elliptic1}
K_X = f^* ( K_Y + G) + \sum (m_i -1) F_i 
\end{equation}

\noi where $G$ is the line bundle equal to $ f_* (K_{X / Y})$.  We turn this into the following equality of $\QQ$-line bundles: 
  
\begin{equation}\label{elliptic2}  
   K_X = f^* \left( K_Y +  G +  \sum^k_{i=1} \frac{m_i -1}{m_i} Q_i \right) 
\end{equation}

\noi where $Q_i \in Y$ is a point viewed as a divisor such that $f^* Q_i = m_i F_i$ as Cartier divisors.   This is a situation where we can apply Theorem~\ref{pt} viewing \eqref{elliptic2} as $K_X + L = f^* (K_Y + M)$ with $L$ trivial and $M := G + \OO(\sum^k_{i=1} \frac{m_i -1}{m_i} Q_i)$.  Note that in general, the equality $K_{X/Y} = f^*M$ holds as $\QQ$-line bundles only : the direct image $f_* (K_{X/Y})$ is a $\ZZ$-line bundle which is not necessarily equal to $M$.   

 Furthermore, by \cite[(2.9)]{F86}, \cite[(8.2.1)]{Ko07} (also see e.g. the introduction to \cite{FM00}), we have equality of $\QQ$-line bundles 
 
 \begin{equation}\label{nonmultiple}
 G = \frac{1}{12} j^* \OO_{\PP^1} (1) + \OO( \sum_{k \in K} \sigma_k P_k )
 \end{equation}
 where $\sigma_k$ is the well-known coefficients (see e.g. \cite[(2.6)]{F86}) from the list of singular fibers \cite{K63} and $j : Y \to \PP^1$ is the map into the moduli.

The trivial metric $1$ is a psh metric for the trivial line bundle $L$, thus we get the corresponding $L^2$ metric $\mu$ for the $\QQ$-line bundle $M$ in a canonical way.

 \begin{theorem}\label{elliptic}
 
  In this case of an elliptic fibration $f: X \to Y$, the $L^2$ metric $\mu$ is the product of a singular hermitian metric given by the  divisor $\sum_{k \in K} \sigma_k P_k  + \sum^k_{i=1} \frac{m_i -1}{m_i} Q_i $ and a singular hermitian metric with vanishing Lelong numbers for $\frac{1}{12} j^* \OO_{\PP^1} (1)$.

 \end{theorem}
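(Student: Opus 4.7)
The plan is to apply Theorem~\ref{main} to a resolution making the elliptic fibration SNC, and then match the resulting canonical bundle decomposition with the classical Kodaira splitting \eqref{nonmultiple}.

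Since $\dim Y = 1$, the base $Y$ is automatically smooth. Let $B \subset Y$ be the reduced divisor supported at the critical values of $f$ (in particular, containing all $P_k$ and $Q_i$). Choose a log resolution $\rho: X' \to X$, which may be taken to be an isomorphism over $Y \setminus B$, such that $f' := f \circ \rho : X' \to Y$ satisfies the SNC condition of Definition~\ref{snc}. Define the $\QQ$-divisor $R' := -K_{X'/X}$, so that $K_{X'} + R' = \rho^* K_X$. Because $\rho$ is an isomorphism outside the singular fibers, $R'$ is exceptional and entirely vertical, so $R'_h = 0$ and the horizontal coefficient hypothesis of Theorem~\ref{main} is vacuously satisfied. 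Pulling back \eqref{elliptic2} by $\rho$ gives the equality of $\QQ$-line bundles
\begin{equation*}
K_{X'} + R' \;=\; f'^{\,*}(K_Y + M).
\end{equation*}
Thus the hypotheses of Theorem~\ref{main} hold for $f' : (X', R') \to (Y, B)$ with the singular hermitian metric $\lambda'$ on $\OO(R')$ given by the defining meromorphic section with divisor $R'$.

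Applying Theorem~\ref{main} yields a decomposition $\mu = \eta + \psi$ of the $L^2$ metric on $M$, where $\eta$ is a singular metric carried by the discriminant divisor $B_{R'}$ and $\psi$ is a Lelong zero psh metric on the moduli $\QQ$-line bundle $J := M - \OO(B_{R'})$. Because $\rho$ is an isomorphism over $Y \setminus B$ and $R'|_{X'_y} = 0$ for every $y \in Y \setminus B$, the fiberwise integration \eqref{induced2} carried out on $X'$ agrees with the one carried out on $X$ equipped with the trivial metric on the trivial bundle $L = \OO$. Consequently, $\mu$ is the intrinsic $L^2$ (Hodge) metric of the original elliptic fibration, and it suffices to identify the two summands of the decomposition.

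The discriminant divisor $B_{R'}$ is characterized by the formula \eqref{discriminant} as the log canonical threshold defect along each component of $B$. For a multiple fiber above $Q_i$ with $f^{*} Q_i = m_i F_i$, pulling back through $\rho$ and applying \eqref{discriminant} immediately gives the coefficient $\tfrac{m_i-1}{m_i}$; for a Kodaira singular fiber above $P_k$, the standard local analysis of the Kodaira types (cf.\ \cite[(2.6)]{F86}, \cite[\S 8.2]{Ko07}) yields exactly the Kodaira coefficient $\sigma_k$. Hence $B_{R'} = \sum_k \sigma_k P_k + \sum_i \tfrac{m_i-1}{m_i} Q_i$, and subtracting this from \eqref{nonmultiple} forces $J = \tfrac{1}{12}\, j^{*}\OO_{\PP^1}(1)$, completing the identification. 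The main obstacle is this final case-by-case matching of $B_{R'}$ with the classical Kodaira coefficients $\sigma_k$; the computation is classical but requires going through the list of Kodaira fiber types.
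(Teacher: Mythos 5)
Your proof is correct and follows essentially the same route as the paper: resolve to an SNC model via $\rho:X'\to X$, set $R'$ by crepant pullback so $K_{X'}+R'=\rho^*K_X$, note $R'_h=0$ for dimension reasons, identify the discriminant divisor $B_{R'}$ with $\sum_k\sigma_k P_k+\sum_i\tfrac{m_i-1}{m_i}Q_i$, match against \eqref{nonmultiple}, and apply Theorem~\ref{main}. You supply slightly more detail (on why the $L^2$ metrics agree and on the lct computation at multiple fibers) than the paper, but the argument is the same.
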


\begin{proof}

 Let $g: X' \to X$ be a proper birational morphism so that Kawamata's condition (\ref{snc}) is satisfied for $X' \to Y$ : we will apply Theorem~\ref{main}. The $L^2$ metric for $X' \to Y$ coincides with the $L^2$ metric $\mu$. 

 Define the divisor $R$ (in Theorem~\ref{main}) by  $K_{X'}  + R = g^* K_X$. For dimension reasons, $R_h = 0$.  The discriminant divisor $B_R$ is equal to $\sum_{k \in K} \sigma_k P_k  + \sum^k_{i=1} \frac{m_i -1}{m_i} Q_i $. Thus as $\QQ$-line bundles, $J$ (in Theorem~\ref{main}) is equal to $\frac{1}{12} j^* \OO_{\PP^1} (1) $. This concludes the proof. 
\end{proof}

\begin{remark1}

 We note that already in this case, it is  natural to adopt the generality of $\QQ$-line bundles to be able to equally deal with multiple and non-multiple singular fibers in the canonical bundle formula, as in our results Theorem~\ref{main}, Theorem~\ref{pt}.  One can compare, for example, with  \cite[Prop. 2.1]{EFM} where  the $L^2$ metric is for the line bundle $f_* (K_{X / Y})$ which seems to count non-multiple singular fibers only (as in \eqref{nonmultiple}). 
 \end{remark1}

\subsection{An alternative proof of a semipositivity theorem} \hfill \\

\noi We  recall the following series of semipositivity theorems  for an algebraic fiber space $f: X \to Y$ (i.e. a surjective morphism of smooth projective varieties with connected fibers) under some general conditions. 

\begin{enumerate}
\item
 \cite[Thm. 5]{Ka81} : Nefness of the locally free sheaf $E := f_* K_{X/Y}$.   (Cf. \cite{F78}, \cite{FFS}.)

  \item
 \cite[Thm. 2]{Ka98} : Log version of (1) for log Calabi-Yau fibrations $f$.

 \item
 
 \cite[Thm. 1.1]{Ka00}, \cite{FF17} : Refinement of (1) replacing nefness by the existence of a singular hermitian metric with vanishing Lelong numbers for $\OO_{\PP(E)} (1)$.  
 
\end{enumerate}

 We showed in Theorem~\ref{main}  that the moduli part line bundle in the canonical bundle formula \cite[Thm. 2]{Ka98} for $f: X \to Y$ admits a psh metric with vanishing Lelong numbers when $R_h \ge 0$. In some cases, the moduli part can coincide with the direct image $f_* (K_{X / Y} ) $ of the relative canonical line bundle of $f$. 
 
 Using this, we obtain as a corollary of Theorem~\ref{main}, an alternative proof of the following semipositivity theorem of Kawamata~\cite[Thm. 1.1 (3)]{Ka00}.  This alternative proof does not use some highly nontrivial results such as the SL2-orbit theorem from the theory of variation of Hodge structure, cf. \cite{CKS86}. 
 
 \begin{remark1}  The only place where Hodge theory was used in the proof of our main results is limited to Lemma~\ref{BR} which has a short proof in \cite[8.4.9 (3)]{Ko07}, in particular, not depending on \cite{CKS86}. 
 \end{remark1}

\begin{theorem}\cite[Thm. 1.1 (3)]{Ka00}, cf. \cite[Cor. 1.2]{FF17} \label{Kawa00}
 Let $f:X \to Y$ be a surjective  morphism with connected fibers between smoth complex projective varieties. Let $B$ be an snc divisor on $Y$ such that $f$ restricted over $Y \setminus B$ is a holomorphic submersion. Let $X_0 := f^{-1} (Y \setminus B)$. Let $n := \dim X - \dim Y$. 
 \begin{itemize}
 \item
 Suppose that a general smooth fiber $F$ satisfies $K_F \sim 0$. 
 \item
    Suppose that $R^n f_* \CC_{X_0}$ has unipotent monodromies around the components of $B$.
  \end{itemize}
    
\noi     Then the line bundle $f_* (K_{X \slash Y})$ admits a singular hermitian metric with vanishing Lelong numbers.   In particular,  $f_* (K_{X \slash Y})$ is nef. 
 
\end{theorem}

The fact that $f_* (K_{X \slash Y})$ is a line bundle in this setting is due to \cite[\S 4]{Ka81}. We follow the exposition of \cite[(8.4.4)]{Ko07}.   

\begin{proof}

 First observe that, by \cite[Lem. 8.3.4]{Ko07}, there exists a vertical divisor $R$ on $X$ such that $K_X + R$ is $\QQ$-linearly equivalent to the pullback by $f$ of a $\QQ$-line bundle on $Y$. 
 Let $\mu: X' \to X$ be a proper modification (given by composition of blow-ups) such that $f': X' \to X \to Y$ satisfies Kawamata's condition (\ref{snc}).  Then one can write (for a divisor $R'$ on $X'$) that $K_{X'} + R' = \mu^* (K_X + R)   = f'^* (K_Y + J(X' / Y, R') + B_{R'} )$ from \cite[Thm. 8.3.7]{Ko07}. 
 
 As \cite[Thm. 8.3.7 (1)]{Ko07} states, the moduli part depends only on the generic fibers $F$ of $f$ and the pairs on them $(F, R_h |_F)$. In the present case, $R$ and $R'$ are vertical divisors, thus we have $R_h = 0, R'_h= 0$. For a generic fiber $F'$ of $X' \to Y$, we have isomorphism $F' \to F$ and in view of the pairs $(F, R_h |_F) = (F, 0),  (F', R'_h |_F') = (F', 0)$,  we have 
 
 \begin{equation}\label{JJ}
 J(X' \slash Y, R' ) = J(X \slash Y, R)
 \end{equation}
  on $Y$ where both sides are defined in \cite[(8.4.6)]{Ko07}.

  Now considering the open set $X'_0 \subset X'$ that is isomorphic to $X_0$ under the proper modification $X' \to X$ (indeed $X' \to X$ can be taken to satisfy this), we have $R^n f_* \CC_{X_0} = R^n f'_* \CC_{X'_0}$. Since the unipotent monodromies condition is satisfied for both of them, from \cite[(8.4.6)]{Ko07}, we have $J(X' \slash Y, R') = f'_* (K_{X' \slash Y})$ and $J(X/Y, R) = f_* (K_{X \slash Y})$ respectively.  
  
   Combining with \eqref{JJ}, we have $f_* (K_{X \slash Y}) \cong J(X' \slash Y, R')$. Since $J(X' \slash Y, R')$ admits a psh metric with vanishing Lelong numbers by Theorem~\ref{main},  so does $f_* (K_{X \slash Y})$. 
\end{proof}

\normalsize

\bigskip

\footnotesize

\qa
\\

\normalsize

\noi \textsc{Dano Kim}

\noi Department of Mathematical Sciences and Research Institute of Mathematics

\noi Seoul National University, 08826  Seoul, Korea

\noi Email address: kimdano@snu.ac.kr


\begin{thebibliography}{widest-}



\bibitem[Am99]{Am99} F. Ambro, \emph{The Adjunction Conjecture and its applications}, Thesis (Ph.D.), The Johns Hopkins University, 1999: arXiv:math/9903060. 

\bibitem[Am04]{Am04} F. Ambro, \emph{Shokurov’s boundary property}, J. Differential Geom., 67(2):229-255, 2004.

\bibitem[AHV77]{AHV77} J. M. Aroca, H. Hironaka and J. L. Vicente, \emph{Desingularization theorems},  Memorias de Matemática del Instituto ''Jorge Juan'',  Madrid 1977.



\bibitem[BPV]{BPV} W. Barth, K. Hulek, C. Peters and A. Van de Ven, \emph{Compact complex surfaces}, Second edition. Ergebnisse der Mathematik und ihrer Grenzgebiete. 3. Folge. A Series of Modern Surveys in Mathematics, 2004. 





\bibitem[Be16]{Be16} R. Berman, \emph{K-polystability of $\QQ$-Fano varieties admitting Kähler-Einstein metrics}, Invent. Math. 203 (2016), no. 3, 973-1025. 




\bibitem[B09]{B09} B. Berndtsson, \emph{Curvature of vector bundles associated to holomorphic fibrations}, Ann. of Math. (2) 169 (2009), no. 2, 531-560. 



\bibitem[BP08]{BP08} B. Berndtsson and M. P\u aun, \emph{Bergman kernels and the pseudoeffectivity of relative canonical bundles}, Duke Math. J. 145 (2008), no. 2, 341-378.






\bibitem[B04]{B04} S. Boucksom, \emph{Divisorial Zariski decompositions on compact complex manifolds}, Ann. Sci. École Norm. Sup. (4) 37 (2004), no. 1, 45–76. 


\bibitem[B20]{B20} S. Boucksom, \emph{Singularities of plurisubharmonic functions and multiplier ideals}, Jan. 2020,  Lecture notes available on the author's website. 

\bibitem[BBJ21]{BBJ21} R. Berman, S. Boucksom and M. Jonsson, \emph{A variational approach to the Yau-Tian-Donaldson conjecture}, J. Amer. Math. Soc. 34 (2021), no. 3, 605–652.



\bibitem[BFJ08]{BFJ08} S. Boucksom, C. Favre and M. Jonsson, \emph{Valuations and plurisubharmonic singularities}, Publ. Res. Inst. Math. Sci. 44 (2008), no. 2, 449-494.


\bibitem[BJ17]{BJ17} S. Boucksom and M. Jonsson, \emph{Tropical and non-Archimedean limits of degenerating families of volume forms},  J. \'Ec. polytech. Math. 4 (2017), 87-139. 



\bibitem[CKS86]{CKS86}  E. Cattani, A. Kaplan and W. Schmid, \emph{Degeneration of Hodge structures}, Ann. of Math. (2) 123 (1986), no. 3, 457-535.






\bibitem[D92]{D92} J.-P. Demailly, \emph{Regularization of closed positive currents and intersection theory}, J. Algebraic Geom. 1 (1992), no. 3, 361–409.

\bibitem[D00]{D00} J.-P. Demailly, \emph{On the Ohsawa-Takegoshi-Manivel $L^2$ extension theorem}, Complex analysis and geometry (Paris, 1997), 47–82, Progr. Math., 188, Birkhäuser, Basel, 2000. 

\bibitem[D11]{D11} J.-P. Demailly, \emph{Analytic methods in algebraic geometry}, Surveys of Modern Mathematics, 1. International Press, Somerville, MA; Higher Education Press, Beijing, 2012. (See also July 2011 version from the author's website. ) 

\bibitem[D13]{D13} J.-P. Demailly, \emph{On the cohomology of pseudoeffective line bundles}, Complex geometry and dynamics, 51–99, Abel Symp., 10, Springer, Cham, 2015. 

\bibitem[D15]{D15} J.-P. Demailly, \textit{Extension of holomorphic functions defined on non reduced analytic subvarieties}, The legacy of Bernhard Riemann after one hundred and fifty years. Vol. I, 191 - 222, Adv. Lect. Math., 35.1, Int. Press, Somerville, MA, 2016. 


\bibitem[D16]{D16} J.-P. Demailly, Lecture on \cite{D15} at Collège de France, Oct 4, 2016. Available at the youtube channel Mathématiques et informatique - Collège de France. 


\bibitem [DX]{DX} J.-P. Demailly, \emph{Complex analytic and differential geometry}, self-published e-book, Institut Fourier, 455 pp, last version: June 21, 2012.

\bibitem[DPS94]{DPS94} J.-P. Demailly, T. Peternell and M. Schneider,  \emph{Compact complex manifolds with numerically effective tangent bundles}, J. Algebraic Geom. 3 (1994), no. 2, 295–345. 



\bibitem[EFM18]{EFM} D. Eriksson, G. Freixas i Montplet and C. Mourougane, \emph{Singularities of metrics on Hodge bundles and their topological invariants},  Algebraic Geometry 5 (6) (2018) 742-775. 

\bibitem[FJ05]{FJ05} C. Favre and M. Jonsson, \emph{Valuations and multiplier ideals},
J. Amer. Math. Soc. 18 (2005), no. 3, 655–684.





\bibitem[FF17]{FF17} O. Fujino and T. Fujisawa, \emph{On semipositivity theorems}, Math. Res. Lett. 26 (2019), no. 5, 1359–1382.

\bibitem[FFS]{FFS} O. Fujino, T. Fujisawa and M. Saito, \emph{Some remarks on the semipositivity theorems}, Publ. Res. Inst. Math. Sci. 50 (2014), no. 1, 85–112. 

\bibitem[FM00]{FM00} O. Fujino and S. Mori, \emph{A canonical bundle formula}, J. Differential Geom. 56 (2000), no. 1, 167–188.



\bibitem[F78]{F78} T. Fujita, \emph{On K\"ahler fiber spaces over curves}, J. Math. Soc. Japan 30 (1978), no. 4, 779-794.

\bibitem[F86]{F86} T. Fujita, \emph{Zariski decomposition and canonical rings of elliptic threefolds}, J. Math. Soc. Japan 38 (1986), no. 1, 19-37. 




\bibitem[GTZ16]{GTZ16} M. Gross, V. Tosatti and Y. Zhang, \emph{Gromov-Hausdorff collapsing of Calabi-Yau manifolds}, Comm. Anal. Geom. 24 (2016), no. 1, p. 93–113.



\bibitem[GTZ19]{GTZ19} M. Gross, V. Tosatti and Y. Zhang, \emph{Geometry of twisted Kähler-Einstein metrics and collapsing}, Comm. Math. Phys. 380 (2020), no. 3, 1401–1438.



\bibitem[GZ15]{GZ15} Q. Guan and X. Zhou, \textit{A proof of Demailly's strong openness conjecture}, Ann. of Math. (2) {182} (2015), no. 2, 605-616.





\bibitem[H77]{H} R. Hartshorne, \emph{Algebraic geometry}, Springer-Verlag, New York-Heidelberg, 1977.

\bibitem[HPS]{HPS} C. Hacon, M. Popa and C. Schnell, \textit{Algebraic fiber spaces over abelian varieties: Around a recent theorem by Cao and P\u aun}, Local and global methods in algebraic geometry, 143-195, Contemp. Math., 712, Amer. Math. Soc., Providence, RI, 2018. 


\bibitem[H64]{H64} H. Hironaka, \emph{Resolution of singularities of an algebraic variety over a field of characteristic zero}, I, II. Ann. of Math. (2) 79 (1964), 109–203; ibid. (2) 79 (1964), 205–326.



\bibitem[Ka81]{Ka81} Y. Kawamata, \emph{Characterization of abelian varieties}, Compositio Math. 43 (1981), no. 2, 253-276.



\bibitem[Ka96]{Ka96} Y. Kawamata, \emph{Subadjunction of log canonical divisors for a subvariety of codimension 2}, Birational algebraic geometry (Baltimore, MD, 1996), 79-88, Contemp. Math., 207, Amer. Math. Soc., Providence, RI, 1997.



\bibitem[Ka98]{Ka98} Y. Kawamata, \emph{Subadjunction of log canonical divisors. II.}, Amer. J. Math. 120 (1998), no. 5, 893--899. 

\bibitem[Ka99]{Ka00A} Y. Kawamata, \emph{On base point free argument}, Slides of a lecture at Algebraic Geometry Workshop at KIAS, September 2000. 

\bibitem[Ka00]{Ka00} Y. Kawamata, \emph{On effective non-vanishing and base-point-freeness}, Kodaira's issue. Asian J. Math. 4 (2000), no. 1, 173--181. 

\bibitem[Ka19]{Ka19} Y. Kawamata, \emph{Private communication}, 2019. 

\bibitem[K10]{K07} D. Kim, \emph{$L^2$ extension of adjoint line bundle sections}, Ann. Inst. Fourier (Grenoble) 60 (2010), no. 4, 1435 - 1477. 

\bibitem[K15]{K15} D. Kim, \emph{Equivalence of plurisubharmonic singularities and Siu-type metrics}, Monatsh. Math. 178 (2015), no. 1, 85 - 95. 

\bibitem[K23]{K21} D. Kim, \emph{$L^2$  extension of holomorphic functions for log canonical pairs}, J. Math. Pures Appl. (9) 177 (2023), 198–213.

\bibitem[KR22]{KR22} D. Kim and A. Rashkovskii, \emph{Asymptotic multiplicities and Monge-Ampère masses (with an appendix by Sébastien Boucksom)},  Math. Ann. 385 (2023), no. 3-4, 1947–1972.

\bibitem[KS19]{KS19} D. Kim and H. Seo, \emph{Jumping numbers of analytic multiplier ideals}, Ann. Polon. Math. 124 (2020), no. 3, 257–280.

\bibitem[KS23]{KS23} D. Kim and H. Seo, \emph{On $L^2$ extension from singular hypersurfaces},  Math. Z. 303 (2023), no. 4, 89.




\bibitem[K63]{K63} K. Kodaira, \emph{On compact analytic surfaces. II, III}, Ann. of Math. (2) 77 (1963), 563–626; ibid. 78 (1963), 1–40. 


\bibitem[K64]{K64} K. Kodaira, \emph{On the structure of compact complex analytic surfaces. I.},  Amer. J. Math. 86 (1964), 751–798. 





\bibitem[Ko97]{Ko97} J. Koll\'ar, \emph{Singularities of pairs}, Algebraic geometry - Santa Cruz 1995, 221-287, Proc. Sympos. Pure Math., 62, Part 1, Amer. Math. Soc., Providence, RI, 1997.



\bibitem[Ko07]{Ko07} J. Koll\'ar, \emph{Kodaira's canonical bundle formula and adjunction},  Flips for 3-folds and 4-folds, 134-162, Oxford Lecture Ser. Math. Appl., 35, Oxford Univ. Press, 2007.



\bibitem[N07]{N07} L. Nicolaescu, \emph{Lectures on the geometry of manifolds}, Second edition, World Scientific Publishing Co.  2007. 

\bibitem[Ma93]{Ma93} L. Manivel, \emph{Un théorème de prolongement $L^2$ de sections holomorphes d'un fibré hermitien}, Math. Z. 212 (1993), no. 1, 107–122.




\bibitem[MT08]{MT08} C. Mourougane and S. Takayama, \emph{Hodge metrics and the curvature of higher direct images}, Ann. Sci. \'Ec. Norm. Sup\'er. (4) 41 (2008), no. 6, 905-924. 



\bibitem[O01]{O01} T. Ohsawa, \emph{On the extension of $L\sp 2$ holomorphic functions. V. Effects of generalization}, Nagoya Math. J.  161  (2001), 1-21 ;    Erratum: 163 (2001), 229. 

\bibitem[OT87]{OT87} T. Ohsawa and K. Takegoshi, \emph{On the extension of $L\sp 2$ holomorphic functions}, Math. Z. 195 (1987), no. 2, 197-204. 




\bibitem[PT18]{PT} M. P\u aun  and S. Takayama, \emph{Positivity of twisted relative pluricanonical bundles and their direct images}, J. Algebraic Geom. 27 (2018), 211-272. 




\bibitem[Si74]{Si74} Y.-T. Siu, \emph{Analyticity of sets associated to Lelong numbers and the extension of closed positive currents}, Invent. Math. 27 (1974), 53–156.


\bibitem[Si96]{Si96} Y.-T. Siu, \emph{The Fujita conjecture and the extension theorem of Ohsawa-Takegoshi}, Geometric complex analysis (Hayama, 1995), 577–592, World Sci. Publ., River Edge, NJ, 1996.




\bibitem[T22]{T22} S. Takayama, \emph{Singularities of $L^2$-metric in the canonical bundle formula}, Amer. J. Math. 144 (2022), no. 6, 1725–1743.

\bibitem[T07]{T07} H. Tsuji, \emph{Pluricanonical systems of projective varieties of general type. II.}, Osaka J. Math. 44 (2007), no. 3, 723–764.



\bibitem[W09]{W09} J. Włodarczyk,  \emph{Resolution of singularities of analytic spaces}, Proceedings of Gökova Geometry-Topology Conference 2008, 31–63, 2009.


\bibitem[Y10]{Y10} K. Yoshikawa, \emph{Singularities and analytic torsion}, arXiv:1007.2835. 



\end{thebibliography}
\end{document}